\def\Q{{\mathbb Q}}
\def\Z{{\mathbb Z}}
\def\C{{\mathbb C}}
\def\F{{\mathbb F}}
                               \def\ST{{\mathbf S}}
\def\CC{{\mathcal C}}
\def\Gal{\mathrm{Gal}}
\def\End{\mathrm{End}}
\def\Aut{\mathrm{Aut}}
\def\cl{\mathrm{cl}}
                        \def\II{\mathrm{Id}}
\def\supp{\mathrm{supp}}
\def\fchar{\mathrm{char}}
\def\Sp{\mathrm{Sp}}
                           \def\Gp{\mathrm{Gp}}
\def\dim{\mathrm{dim}}
                             \def\cl{\mathrm{cl}}
                             \def\supp{\mathrm{supp}}
                               \def\sep{\mathrm{sep}}
     \def\W{{\mathfrak W}}
      \def\sep{\mathrm{sep}}
                          \def\rr{{\mathfrak r}}
                          \def\RR{{\mathfrak R}}
\def\a{{\mathfrak a}}
                               \def\b{{\mathfrak b}}
\newtheorem{thm}{Theorem}[section]
\newtheorem{lem}[thm]{Lemma}
\newtheorem{cor}[thm]{Corollary}
\theoremstyle{definition}
\newtheorem{ex}[thm]{Example}
           \newtheorem{rem}[thm]{Remark}
\title{Division by $2$ on hyperelliptic curves and jacobians}
\author {Yuri G. Zarhin}
\address{Pennsylvania State University, Department of Mathematics, University Park, PA 16802, USA}
\email{zarhin@math.psu.edu}
\begin{document}\date{}

\maketitle

\section{Introduction}

Let $K$ be an algebraically closed field of characteristic different from $2$.
If $n$ and $i$  are positive integers and $\mathbf{r}=\{r_1, \dots, r_n\}$ is a sequence of $n$ elements $r_i\in K$ then we write
$$\mathbf{s}_i(\mathbf{r})=\mathbf{s}_i(r_1, \dots, r_n)\in K$$
for the $i$th basic symmetric function in $r_1, \dots, r_n$. If we put $r_{n+1}=0$ then $\mathbf{s}_i(r_1, \dots, r_n)=\mathbf{s}_i(r_1, \dots, r_n, r_{n+1})$.

 Let $g \ge 1$ be an integer.
Let $\CC$ be the smooth projective model of the smooth affine plane $K$-curve
$$y^2=f(x)=\prod_{i=1}^{2g+1}(x-\alpha_i)$$
where $\alpha_1,\dots, \alpha_{2g+1}$ are {\sl distinct} elements of $K$. It is well known that $\CC$ is a genus $g$ hyperelliptic curve over $K$ with precisely one {\sl infinite} point, which we denote by $\infty$.  In other words,
$$\CC(K)=\{(a,b)\in K^2\mid  b^2=\prod_{i=1}^{2g+1}(a-\alpha_i)\}\bigsqcup \{\infty\} .$$
Clearly, $x$ and $y$ are nonconstant rational functions on $\CC$, whose only pole is $\infty$. More precisely, the polar divisor of $x$ is $2 (\infty)$ and the polar divisor of $y$ is $(2g+1)(\infty)$. The zero divisor of $y$ is $\sum_{i=1}^{2g+1} (\W_i)$
where
$$\W_i=(\alpha_i,0) \in \CC(K) \ \forall i=1, \dots , 2g+1.$$
 We write $\iota$ for the hyperelliptic involution 
$$\iota: \CC \to \CC,  (x,y)\mapsto (x,-y), \ \infty \mapsto\infty.$$
The set  of fixed points of $\iota$ consists of $\infty$ and all $\W_i$.
It is well known that for each $P \in \CC(K)$ the divisor $(P)+\iota(P)-2(\infty)$ is principal. More precisely, if $P=(a,b)\in \CC(K)$ then $(P)+\iota(P)-2(\infty)$ is the divisor of the rational function $x-a$ on $C$. If $D$ is a divisor on $\CC$ then we write $\supp(D)$ for its {\sl support}, which is a finite subset of $\CC(K)$.

We write $J$ for the jacobian of $\CC$, which is a $g$-dimensional abelian variety over $K$.  If $D$ is a degree zero divisor on $\CC$ then we write $\cl(D)$ for its linear equivalence class, which is viewed as an element of $J(K)$.
We will identify $\CC$ with its image in $J$ with respect to the canonical regular map $\CC \hookrightarrow J$ under which  $\infty$ goes to the zero of group law on $J$. In other words, a point $P \in \CC(K)$ is identified with  $\cl((P)-(\infty))\in J(K)$. Then the action of  $\iota$ on  $\CC(K)\subset J(K)$ coincides with multiplication by $-1$ on $J(K)$.
In particular, the list of points of order  2 on $\CC$ consists of  all $\W_i$.

Recall \cite[Sect. 13.2, p. 411]{Wash} that if $D$ is an effective divisor of (nonnegative) degree $m$, whose support does {\sl not} contain $\infty$, then the degree zero divisor $D-m(\infty)$ is called {\sl semi-reduced} if it enjoys the following properties.

\begin{itemize}
\item
If $\W_i$ lies in $\supp(D)$ then it appears in $D$ with multiplicity 1.
\item
If a  a point $Q$ of $\CC(K)$ lies in $\supp(D)$ and does not coincide with any of $\W_i$  then $\iota(P)$ does {\sl not} lie in $\supp(D)$.
\end{itemize}
If, in addition, $m \le g$ then $D-m(\infty)$ is called {\sl reduced}.

It is known (\cite[Ch. 3a]{Mumford}, \cite[Sect. 13.2, Prop. 3.6 on p. 413]{Wash}) that for each $\a \in J(K)$ there exist {\sl exactly one}  nonnegative  $m$ and  (effective) degree $m$ divisor  $D$  such that the degree zero divisor $D-m(\infty)$ is {\sl reduced} and  $\cl(D-m(\infty))=\a$.  (E.g.,, the zero divisor with $m=0$ corresponds to $\a=0$.)  If 
$$m\ge 1, \ D=\sum_{j=1}^m(Q_j)\ \mathrm{ where }  \ Q_j=(b_j,c_j) \in \CC(K) \ \forall \ j=1, \dots , m$$
(here $Q_j$ do {\sl not} have to be distinct)
then the corresponding
$$\a=\cl(D-m(\infty))=\sum_{j=1}^m Q_j \in J(K).$$
The {\sl Mumford's representation} (\cite[Sect. 3.12]{Mumford}, \cite[Sect. 13.2, pp. 411--415, especially, Prop. 13.4, Th. 13.5 and Th. 13.7]{Wash} 
of $\a \in J(K)$ is 
is the pair $(U(x),V(x))$ of polynomials $U(x),V(x)\in K[x]$ such that 
$$U(x)=\prod_{j=1}^r(x-a_j)$$
is a degree $r$ monic polynomial while $V(x)$ has degree $m<\deg(U)$, the polynomial  $V(x)^2-f(x)$ is divisible by $U(x)$, and
$D-m(\infty)$ coincides with the {\sl gcd} (i.e., with the  minimum) of the divisors of rational functions $U(x)$ and $y-V(x)$ on $\CC$.  This implies that each $Q_j$ is a zero of $y-V(x)$, i.e.,
$$b_j=V(a_j), \ Q_j=(a_j,V(a_j))\in \CC(K) \ \forall \ j=1, \dots m.$$
  Such a pair always exists, it is unique, and (as we've just seen) uniquely determines not only $\a$ but also divisors $D$  and $D-m(\infty)$.
(The case $\alpha=0$ corresponds to $m=0, D=0$ and the pair $(U(x)=1, V(x)=0)$.) 
 
Conversely, if $U(x)$ is a  monic polynomial of degree $m\le g$ and $V(x)$ a polynomial such that $\deg(V)<\deg(U)$ and $V(x)^2-f(x)$ is divisible by $U(x)$ then there exists exactly one  $\a=\cl(D-m(\infty))$ where $D-m(\infty)$ is a reduced divisor such that $(U(x),V(x)$ is the Mumford's representation  of 
$\cl(D-m(\infty))$.

Let $P=(a,b)$ be a $K$-point on $\CC$, i.e.,
$$a,b \in K, \ b^2=f(a)=\prod_{i=1}^n(a-\alpha_i).$$
The aim of this note is to divide explicitly $P$  by $2$ in $J(K)$, i.e., to give explicit formulas for the {\sl Mumford's representation} 
 of {\sl all} $2^{2g}$  divisor classes $\cl(D-g(\infty))$  such that $2D+\iota(P)$ is linearly equivalent to  $(2g+1)\infty$, i.e.,  
$$2\cl(D-g(\infty))=P \in \CC(K)\subset J(K).$$
(It turns out that each such $D$ has degree $g$ and its support does not contain any of   $\W_i$.)

The paper is organized as follows. In Section \ref{divisors} we obtain auxiliary results about divisors on hyperelliptic curves. In particular, we prove (Theorem \ref{notheta}) that if $g>1$ then the only point of $\CC(K)$ that is divisible by two in $\CC(K)$ (rather than in $J(K)$) is $\infty$ (of course, if $g>1$). We also prove that $\CC(K)$ does {\sl not} contain points of order $n$ if $2<n\le 2g$. 
In Section \ref{division} we describe  explicitly for a given $P=(a,b)\in \CC(K)$ the Mumford's representation of $2^{2g}$ divisor classes $\cl(D-g(\infty))$ such that $D$ is an effective degree $g$ reduced divisor on $\CC$ and
$$2\cl(D-g(\infty))=P \in \CC(K)\subset J(K).$$
The description is given in terms of square roots $\sqrt{a-\alpha_i}$'s ($1\le i\le 2g+1$), whose product is $-b$. (There are exactly $2^{2g}$ choices of such square roots.) In Section \ref{rat} we discuss the {\sl rationality questions}, i.e., the case when
$f(x),\CC,J$ and $P$ are defined over a subfield $K_0$ of $K$ and ask when dividing $P$ by $2$ we get a point of $J(K_0)$. 

Sections \ref{torsion} and \ref{symplectic} deal with torsion points on  certain naturally arised subvarieties of $J$ containg $\CC$. 
In particular, we discuss the case of a {\sl generic hyperelliptic curve} in characteristic zero, 
using as a starting point results of B. Poonen - M. Stoll \cite{PoonenStoll} and of J. Yelton \cite{Yelton}.
 Our approach is based on ideas of J.-P. Serre \cite{SerreMFV} and F. Bogomolov \cite{Bogomolov}.

This paper is a follow up of \cite{Zarhin,BZ} where the (more elementary) case of elliptic curves is discussed.  

{\bf Acknowledgements}. I am deeply grateful to Bjorn Poonen for helpful stimulationg discussions.
This work was partially supported by a grant from the Simons Foundation (\#246625 to Yuri Zarkhin). I've started to write this paper
 during my stay in May-June 2016 at the Max-Planck-Institut f\"ur Mathematik (Bonn, Germany),
whose hospitality and support are gratefully acknowledged.

\section{Divisors on hyperelliptic curves}
\label{divisors}

\begin{lem}[Key Lemma]
\label{key}
Let $D$ be an effective divisor on $\CC$ of  degree $m>0$ such that $m \le 2g+1$ and $\supp(D)$ does not contain $\infty$. Assume that the divisor $D-m(\infty)$ is principal.

\begin{enumerate}
\item[(1)]
Suppose that $m$ is odd.
 Then:
 
 \begin{itemize}
 \item[(i)]
  $m=2g+1$ and there exists exactly one polynomial $v(x)\in K[x]$ such that  the divisor of $y-v(x)$ coincides with $D-(2g+1)(\infty)$. In addition, $\deg(v)\le g$.
    \item[(ii)]
    If $\W_i$ lies in  $\supp(D)$ then it appears in $D$ with multiplicity 1.
    \item[(iii)]
    If $b$ is a nonzero element of $K$ and a $K$-point $P=(a,b) \in \CC(K)$ lies in  $\supp(D)$ then $\iota(P)=(a,-b)$ does not lie in  $\supp(D)$.
  \end{itemize}
  \item[(2)]
  Suppose that $m=2d$ is even. Then there exists exactly one monic degree $d$ polynomial $u(x)\in K[x]$ such that  the divisor of $v(x)$ coincides with $D-m(\infty)$.  In particular, every point $Q \in \CC(K)$ appears in $D-m(\infty)$ with the same multiplicity as $\iota(Q)$.
  \end{enumerate}
\end{lem}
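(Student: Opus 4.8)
The plan is to describe every rational function on $\CC$ that is regular away from $\infty$, and then read off the structure of $D$ from it. Since $D-m(\infty)$ is assumed principal, I would fix a rational function $h$ with $\div(h)=D-m(\infty)$. Because $D$ is effective of degree $m$ with $\infty\notin\supp(D)$, this $h$ is regular on $\CC\setminus\{\infty\}$ and has a pole of order exactly $m$ at $\infty$, and no other poles. Now the coordinate ring of the affine curve $\CC\setminus\{\infty\}$ is $K[x,y]/(y^2-f(x))$, which is a free $K[x]$-module with basis $\{1,y\}$; hence $h=A(x)+B(x)\,y$ for unique polynomials $A,B\in K[x]$. Using that the polar divisor of $x$ is $2(\infty)$ and that of $y$ is $(2g+1)(\infty)$, the summand $A(x)$ contributes (if nonzero) a pole of even order $2\deg A$ at $\infty$, while $B(x)\,y$ contributes (if nonzero) a pole of odd order $2\deg B+2g+1$; since an even and an odd pole order cannot cancel, the pole order of $h$ at $\infty$ equals $\max(2\deg A,\,2\deg B+2g+1)$, which must be $m$. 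This parity dichotomy drives both cases.

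If $m$ is odd, then $2\deg A\ne m$, so $m=2\deg B+2g+1$; together with $m\le 2g+1$ this forces $\deg B\le 0$, and $B=0$ is impossible ($h$ would then have even pole order), so $B=c$ is a nonzero constant, whence $m=2g+1$, and $2\deg A<m$ gives $\deg A\le g$. Setting $v=-A/c$ gives $\div(y-v(x))=\div(h/c)=D-(2g+1)(\infty)$ with $\deg v\le g$. For uniqueness of $v$ I would observe that if $v_1,v_2$ both work then $(y-v_1)/(y-v_2)$ is a unit on $\CC$, hence a constant $\lambda$, and matching the coefficient of $y$ forces $\lambda=1$. For (ii) and (iii): since $\infty$ is a pole of $y-v(x)$, the support of $D$ is precisely the set of affine zeros of $y-v(x)$, i.e.\ the points $(a,b)$ with $b=v(a)$; thus $(a,b)$ and $(a,-b)$ can both occur only if $b=0$, which is (iii). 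For (ii) I would use that at $\W_i=(\alpha_i,0)$ the function $y$ is a uniformizer, because $x-\alpha_i$ equals $y^2$ divided by the unit $\prod_{j\ne i}(x-\alpha_j)$ (the $\alpha_j$ being distinct); so a zero of $v(x)$ at $\alpha_i$ has even order there while $y$ has order $1$, forcing $\mathrm{ord}_{\W_i}(y-v(x))=1$.

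If $m=2d$ is even, then the odd quantity $2\deg B+2g+1$ cannot equal $m$, so $m=2\deg A$ and $2\deg B+2g+1<m$; since $m\le 2g+1$ and $m$ is even we have $m\le 2g$, hence $d\le g$, and then $2\deg B+2g+1<m\le 2g$ is impossible unless $B=0$. So $h=A(x)$ with $\deg A=d$, and dividing by the leading coefficient of $A$ produces a monic polynomial $u$ of degree $d$ with $\div(u(x))=\div(h)=D-m(\infty)$; uniqueness is immediate since two monic polynomials with the same divisor have ratio the constant $1$. For the multiplicity claim I would factor $u(x)=\prod_j(x-a_j)$ and invoke the identity $\div(x-a)=(P)+(\iota P)-2(\infty)$ recalled in the introduction, valid for any point $P$ lying over $a$: in each summand every point $Q$ appears with the same multiplicity as $\iota(Q)$, and since $\iota$ fixes $\infty$, summing over $j$ gives the same property for $D-m(\infty)$, hence for $D$.

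The only genuinely local step is verifying that $y$ is a uniformizer at each Weierstrass point $\W_i$; I expect this, together with keeping the pole-order and multiplicity bookkeeping exactly straight, to be the main thing to be careful about. I do not foresee a serious obstacle, since the core of the argument is simply the free $K[x]$-module structure of the ring of functions regular away from $\infty$.
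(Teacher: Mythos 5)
Your proof is correct and follows essentially the same route as the paper: write the function with divisor $D-m(\infty)$ as $A(x)+B(x)y$ in the free $K[x]$-module $K[x]\oplus K[x]y$ and compare the even/odd pole orders $2\deg A$ and $2\deg B+2g+1$ at $\infty$ to force the two cases. The only difference is cosmetic: where the paper cites Washington for (ii) and leaves the $\iota$-symmetry in (2) implicit, you verify them directly (the uniformizer computation at $\W_i$ and the identity $\div(x-a)=(P)+(\iota P)-2(\infty)$), and both verifications are correct.
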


\begin{proof}
Leh $h$ be a rational function on $\CC$, whose divisor coincides with $D-m(\infty)$. Since $\infty$ is the only pole of $h$, the function $h$ is a polynomial in $x,y$ and therefore may be presented as
$$h=s(x)y-v(x),   \ \mathrm{ with } \ u,v \in K[x].$$
If $s=0$ then $h$ has at $\infty$ the pole of even order $2\deg(v)$ and therefore $m=2\deg(v)$. 

Suppose that
 $s \ne 0$. Clearly, $s(x)y$ has at $\infty$ the pole of odd order $2\deg(s)+(2g+1) \ge (2g+1)$. So, the orders of the pole for $s(x)y$ and $v(x)$ are distinct, because they have different parity and therefore the order $m$ of the pole of $h=s(x)y-v(x)$ coincides with
$\max(2\deg(s)+(2g+1), 2\deg(v))\ge 2g+1$.
This implies that $m=2g+1$; in particular, $m$ is even. It follows that $m$ is {\sl even} if and only if $s(x)=0$, i.e., $h=-v(x)$; in addition, $\deg(v)\le (2g+1)/2$, i.e., $\deg(v)\le g$.  In order to finish the proof of (2), it suffices to divide $-v(x)$ by its leading coefficient and denote the ratio by $u(x)$. (The uniqueness of monic $u(x)$ is obvious.)

Let us prove (1).   
Since $m$ is {\sl odd},
$$m=2\deg(s)+(2g+1)>2\deg(v).$$
 Since $m \le 2g+1$,  we obtain that $\deg(s)=0$, i.e., $s$ is a nonzero element of $K$ and $2\deg(v)< 2g+1$.  The latter inequality means that $\deg(v)\le g$. Dividing $h$ by the constant $s$, we may and will assume that $s=1$ and therefore $h=y-v(x)$ with 
 $$v(x)\in K[x], \ \deg(v) \le g.$$
  This proves (i). (The uniqueness of $v$ is obvious.)
  The assertion (ii)  is contained in Proposition 13.2(b) on  pp. 409-10 of \cite{Wash}.   
 In order to prove (iii),  we just follow arguments on p. 410 of \cite{Wash} (where it is actually proven). Notice that our $P=(a,b)$ is a zero of $y-v(x)$, i.e. $b-v(a)=0$. Since, $b\ne 0$, $v(a)=b \ne 0$ and $y-v(x)$ takes on at $\iota(P)=(a,-b)$ the  value $-b-v(a)=-2b \ne 0$.  This implies that $\iota(P)$ is {\sl not} a zero of $y-v(x)$, i.e., $\iota(P)$ does not lie in $\supp(D)$.
\end{proof}

\begin{rem}
 Lemma \ref{key}(1)(ii,iii) asserts that if $m$ is odd
the divisor $D-m(\infty)$ is {\sl semi-reduced}. See \cite[the penultimate paragraph on p. 411]{Wash}.  
\end{rem} 

\begin{cor}
\label{bytwo}
Let $P=(a,b)$ be a $K$-point on $\CC$ and $D$ an effective divisor on $\CC$ such that $m=\deg(D)\le g$ and $\supp(D)$ does not contain $\infty$. Suppose that the degree zero divisor $2D+\iota(P)-(2m+1)(\infty)$ is principal. Then:
\begin{itemize}
\item[(i)]
$m=g$ and there exists a polynomial $v_D(x)\in K[x]$ such that $\deg(v)\le g$ and the divisor of $y-v_D(x)$ coincides with $2D+\iota(P)-(2g+1)(\infty)$. In particular, $-b=v(a)$.
\item[(ii)]
If a point $Q$ lies in  $\supp(D)$ then $\iota(Q)$ does not lie in  $\supp(D)$. In particular, 
\begin{enumerate}
\item
none of $\W_i$ lies in  $\supp(D)$;
\item
$D-g(\infty)$ is reduced.
\end{enumerate}
\item[(iii)]
The point $P$ does not lie in  $\supp(D)$.
\end{itemize}
\end{cor}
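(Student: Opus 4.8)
The plan is to apply the Key Lemma (Lemma~\ref{key}) to the effective divisor $D'=2D+\iota(P)$. First I would verify its hypotheses. Since $P=(a,b)\in\CC(K)$ has $a,b\in K$, we have $P\ne\infty$, hence $\iota(P)=(a,-b)\ne\infty$, so $\supp(D')=\supp(D)\cup\{\iota(P)\}$ does not contain $\infty$; moreover $D'$ is effective, $\deg(D')=2m+1$ is positive and odd, and $2m+1\le 2g+1$ because $m\le g$. Since $D'-(2m+1)(\infty)=2D+\iota(P)-(2m+1)(\infty)$ is principal by hypothesis, Lemma~\ref{key}(1) applies to $D'$ (with the lemma's ``$m$'' equal to $2m+1$). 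Part~(1)(i) then yields at once that $2m+1=2g+1$, i.e.\ $m=g$, together with a unique polynomial $v_D(x)\in K[x]$ with $\deg(v_D)\le g$ whose divisor equals $2D+\iota(P)-(2g+1)(\infty)$. Since $\iota(P)=(a,-b)$ lies in $\supp(D')$, it is a zero of $y-v_D(x)$, so $-b-v_D(a)=0$; this gives $-b=v_D(a)$ and proves~(i).

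For~(ii) I would first dispose of the Weierstrass points. If some $\W_i$ lay in $\supp(D)$, then $\W_i$ would occur in $2D$, and hence in $D'=2D+\iota(P)$ (multiplicities add and all are nonnegative), with multiplicity at least $2$, contradicting Lemma~\ref{key}(1)(ii). Thus no $\W_i$ lies in $\supp(D)$, which is assertion~(1). Now take any $Q\in\supp(D)$; by the previous sentence $Q$ is none of the $\W_i$, so its second coordinate is nonzero and $\iota(Q)\ne Q$. If $\iota(Q)$ also lay in $\supp(D)$, then both $Q$ and $\iota(Q)$ would occur in $2D$, hence in $D'$, so $Q\in\supp(D')$ would have nonzero second coordinate while $\iota(Q)\in\supp(D')$, contradicting Lemma~\ref{key}(1)(iii). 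Hence $\iota(Q)\notin\supp(D)$, the main assertion of~(ii). Together with $m=g$, the assumption that $\supp(D)$ avoids $\infty$, and the two bullet conditions just verified, this is precisely the statement that $D-g(\infty)$ is reduced, giving~(2).

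Finally, for~(iii) I would evaluate the function $y-v_D(x)$ at $P=(a,b)$: by~(i) we have $v_D(a)=-b$, so the value is $b-v_D(a)=2b$. If $b\ne0$, then $2b\ne0$ since $\fchar(K)\ne2$, so $P$ is not a zero of $y-v_D(x)$, hence $P\notin\supp(D')$; since $\supp(D)\subseteq\supp(D')$ this gives $P\notin\supp(D)$. If $b=0$, then $a$ is a root of $f$, so $P=\W_i$ for some $i$, and by~(ii)(1) already $P\notin\supp(D)$. In either case $P\notin\supp(D)$, which is~(iii). The whole argument is a fairly mechanical unwinding of the Key Lemma; the only places needing care are checking the degree and support hypotheses of Lemma~\ref{key} for $D'$, and treating the degenerate case $b=0$ (where $\iota(P)=P$ and may coincide with a $\W_i$) separately in parts~(ii) and~(iii).
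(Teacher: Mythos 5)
Your proposal is correct and follows essentially the same route as the paper: apply Lemma~\ref{key}(1) to the odd-degree divisor $2D+\iota(P)$, read off $m=g$ and the polynomial $v_D$ from (1)(i), and deduce (ii) and (iii) from (1)(ii)--(iii) together with $\iota(P)$ being a zero of $y-v_D(x)$ and $\iota(\W_i)=\W_i$. You merely spell out the multiplicity bookkeeping and the degenerate case $b=0$ that the paper's one-line proof leaves implicit.
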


\begin{proof}
One has only to apply Lemma \ref{key} to the divisor $2D+\iota(P)$ of {\sl odd} degree $2m+1\le 2g+1$ and notice that $\iota(P)=(a,-b)$ is a zero of $y-v(x)$ while $\iota(\W_i)=\W_i$ for all $i=1, \dots , 2g+1$.
\end{proof}

Let $d \le g$ be a positive integer and $\Theta_d \subset J$ be the image of the regular map
$$\CC^{d} \to J, \ (Q_1, \dots , Q_{d}) \mapsto \sum_{i=1}^{d} Q_i\subset J.$$
It is well known that $\Theta_d$ is a closed $d$-dimensional subvariety of $J$ that coincides with $\CC$ for $d=1$ and with $J$ if $d \ge g$; in addition, $\Theta_d\subset\Theta_{d+1}$ for all $d$. Clearly, each $\Theta_d$ is stable under multiplication by $-1$ in $J$.
We write $\Theta$ for the $(g-1)$-dimensional {\sl theta divisor} $\Theta_{g-1}$.  

\begin{thm}
\label{notheta}
Suppose that $g>1$ and 
let $$\CC_{1/2}:=2^{-1}\CC \subset J$$
 be the preimage of $\CC$ with respect to multiplication by 2 in $J$.  Then the intersection of $\CC_{1/2}(K)$ and $\Theta$ 
consists of points of order dividing  $2$ on $J$. In particular, the intersection of $\CC$ and $C_{1/2}$ consists of $\infty$ and all $\W_i$'s. 
\end{thm}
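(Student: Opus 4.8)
The plan is to deduce everything from Corollary~\ref{bytwo}, using the hypothesis $x\in\Theta$ only to bound a degree. Let $x\in\CC_{1/2}(K)\cap\Theta(K)$ and set $P:=2x$; since $x\in\CC_{1/2}(K)$ we have $P\in\CC(K)\subset J(K)$, and it suffices to show $P=\infty$, i.e.\ $2x=0$. First I would represent $x$ by an effective divisor of controlled degree: as $\Theta=\Theta_{g-1}$ is the image of the proper variety $\CC^{g-1}$ under a morphism and $K$ is algebraically closed, we may write $x=\sum_{i=1}^{g-1}Q_i$ with $Q_i\in\CC(K)$; deleting the $Q_i$ equal to $\infty$ and collecting the rest into an effective divisor $D$ with $\infty\notin\supp(D)$ gives $x=\cl(D-m(\infty))$ with $m:=\deg(D)\le g-1$.

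Next I would assume for contradiction that $P=(a,b)\ne\infty$ and produce the hypothesis of Corollary~\ref{bytwo}. The identity $2x=P$ in $J(K)$ says that the degree-zero divisor $2(D-m(\infty))-((P)-(\infty))=2D-(P)-(2m-1)(\infty)$ is principal; adding to it the principal divisor $(P)+\iota(P)-2(\infty)=\div(x-a)$ shows that $2D+\iota(P)-(2m+1)(\infty)$ is principal. Since $D$ is effective of degree $m\le g$ with $\infty\notin\supp(D)$, Corollary~\ref{bytwo}(i) applies and forces $m=g$, contradicting $m\le g-1$. Therefore $P=\infty$, so $x$ has order dividing $2$ in $J$; this is the first assertion.

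For the ``in particular'' I would use $g>1$ to get $\CC=\Theta_1\subseteq\Theta_{g-1}=\Theta$, whence $\CC\cap\CC_{1/2}\subseteq\Theta\cap\CC_{1/2}$ consists of points of order dividing $2$ in $J$; among points of $\CC(K)\subset J(K)$ these are exactly $\infty$ and $\W_1,\dots,\W_{2g+1}$ (as recalled in the introduction), and conversely each of these is $2$-torsion, hence is sent to $0=\infty\in\CC$ by multiplication by $2$ and so lies in $\CC_{1/2}$. Thus $\CC\cap\CC_{1/2}=\{\infty,\W_1,\dots,\W_{2g+1}\}$.

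I do not expect a serious obstacle: the whole argument is short once Corollary~\ref{bytwo} is in hand. The two small points to be careful about are (a) that membership of $x$ in $\Theta_{g-1}$ really does bound the degree of some effective representing divisor by $g-1$, and (b) rewriting the principal divisor arising from $2x=P$ in exactly the normal form $2D+\iota(P)-(2m+1)(\infty)$ so that Corollary~\ref{bytwo} can be invoked verbatim.
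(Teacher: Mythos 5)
Your proposal is correct and follows essentially the same route as the paper: write the point of $\Theta=\Theta_{g-1}$ as $\cl(D-m(\infty))$ with $D$ effective of degree $m\le g-1$ avoiding $\infty$, observe that $2\cl(D-m(\infty))=P$ makes $2D+\iota(P)-(2m+1)(\infty)$ principal, and invoke Corollary~\ref{bytwo} to force $m=g$, a contradiction unless $P=\infty$; the ``in particular'' statement via $\CC\subset\Theta$ and the $2$-torsion points $\W_i$ is also handled exactly as in the paper. The only (harmless) difference is that you spell out the passage to $2D+\iota(P)-(2m+1)(\infty)$ by adding the principal divisor of $x-a$, a step the paper leaves implicit.
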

\begin{proof}
Suppose that $m \le g-1$ is a positive integer and we have $m$  (not necessarily distinct) points $Q_1, \dots Q_m$  of $\CC(K)$ and  a point $P\in \CC(K)$ such that in $J(K)$
$$2\sum_{j=1}^m Q_j=P.$$
We need to prove that $P=\infty$, i.e.,  it is the zero of group law in $J$ and therefore
$\sum_{j=1}^m Q_j$ is an element of order 2 (or 1) in $J(K)$. Suppose that this is not true.  Decreasing $m$ if necessary, we may and will assume that none of $Q_j$ is $\infty$ (but $m$ is still positive and does not exceed $g-1$).  Let us consider the effective degree $m$ divisor $D=\sum_{j=1}^m (Q_j)$ on $\CC$. The equality in $J$ means that the divisors $2[D-m(\infty)]$ and $(P)-(\infty)$  on $\CC$ are linearly equivalent.  This means that the divisor $2D+(\iota(P))-(2m+1)(\infty)$ is {\sl principal}. Now Corollary \ref{bytwo} tells us that $m=g$, which is not the case. The obtained contradiction proves that the intersection of $\CC_{1/2}$ and $\Theta$ consists of points of order 2 and 1.  

Since $g>1$,  $\CC\subset \Theta$ and therefore the intersection of $\CC$ and $\CC_{1/2}$ also consists of points of order 2 or 1, i.e., lies in the union of $\infty$ and all $\W_i$'s.  Conversely, since each $\W_i$  has order  $2$ in $J(K)$ and $\infty$ has order 1, they all lie in $\CC_{1/2}$ (and, of course, in $\CC$).
\end{proof}

\begin{rem}
It is known \cite[Ch. VI, last paragraph of Sect. 11, p. 122]{Serre} that the curve $\CC_{1/2}$ is irreducible. (Its projectiveness and smoothness   follow readily from  the  projectiveness and smoothness of $\CC$ and the \'etaleness of multiplication by 2 in $J$.) See \cite{Flynn} for an explicit description of equations that cut out $\CC_{1/2}$ in a projective space.
\end{rem}

\begin{cor}
Suppose that $g>1$. Let $n$  an integer such that $3 \le n \le 2g$.
Then $\CC(K)$ does not contain a point of order $n$ in $J(K)$. In particular, $\CC(K)$  does not contain  points of order $3$ or $4$.
\end{cor}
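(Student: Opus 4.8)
The plan is to argue by contradiction: assume $P=(a,b)\in\CC(K)$ has order exactly $n$ in $J(K)$ with $3\le n\le 2g$, reduce this to a principality statement about the explicit effective divisor $n(P)$, and then invoke the Key Lemma (Lemma~\ref{key}). I first record the structural facts that come for free. Since $\infty$ has order $1$ and every $\W_i$ has order $2$ while $n\ge 3$, the point $P$ is neither $\infty$ nor any $\W_i$; equivalently $b\ne 0$, so $\iota(P)=(a,-b)\ne P$, the point $P$ is not fixed by $\iota$, and $\supp((P))=\{P\}$ does not contain $\infty$. Under the identification of $\CC$ with its image in $J$ one has $P=\cl((P)-(\infty))$, so $nP=0$ in $J(K)$ says exactly that the degree-zero divisor $n(P)-n(\infty)=n\bigl((P)-(\infty)\bigr)$ is principal. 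Hence, with $D:=n(P)$ — an effective divisor of degree $n$, $0<n\le 2g\le 2g+1$, whose support avoids $\infty$ — all hypotheses of Lemma~\ref{key} hold.

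Now I apply Lemma~\ref{key} to $D$ and split on the parity of $n$. If $n$ is odd, part (1)(i) of the lemma forces $n=2g+1$, contradicting $n\le 2g$. (The odd case also follows straight from Theorem~\ref{notheta}: taking $k=(n-1)/2$, which satisfies $1\le k\le g-1$ because $n\le 2g-1$, one has $kP\in\Theta_k\subseteq\Theta$ and $2kP=(n-1)P=-P=\iota(P)\in\CC$, so $kP\in\CC_{1/2}(K)\cap\Theta$; the theorem then gives that $kP$ has order dividing $2$, i.e.\ $2kP=0$, so $P=0$, impossible.) If $n$ is even, part (2) of Lemma~\ref{key} says every point of $\CC(K)$ appears in $D-n(\infty)$ with the same multiplicity as its image under $\iota$; but $P$ appears there with multiplicity $n>0$, so $\iota(P)$ must appear with multiplicity $n>0$ as well, forcing $\iota(P)\in\supp(D)=\{P\}$, i.e.\ $\iota(P)=P$ — contrary to $\iota(P)\ne P$. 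Either way we reach a contradiction, so $\CC(K)$ has no point of order $n$; the closing sentence of the corollary is the special cases $n=3,4$, both admissible since $g>1$ gives $2g\ge 4$.

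There is no genuinely hard step here, but the point worth emphasizing is why one should route the even case through Lemma~\ref{key}(2) rather than through Theorem~\ref{notheta}. For $n=2m$ the only multiples $kP$ whose double $2kP$ lands on $\CC$ are those with $n\mid 2k$, i.e.\ $kP\in J[2]$, and for such points the theorem asserts nothing new, since they already have order dividing $2$; so Theorem~\ref{notheta} by itself does not rule out even torsion orders. What does the job is precisely the $\iota$-symmetry that Lemma~\ref{key}(2) imposes on the zero divisor of the monic polynomial attached to $D-n(\infty)$, a symmetry that $n(P)$ cannot possess because $\iota(P)\ne P$.
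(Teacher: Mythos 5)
Your proof is correct, but it takes a different route from the paper's. You apply Lemma~\ref{key} once, directly to the principal divisor $n(P)-n(\infty)$: in the odd case part (1)(i) forces $n=2g+1$, contradicting $n\le 2g$, and in the even case part (2) forces the affine part of the divisor to be $\iota$-symmetric, which $n(P)$ cannot be since $\iota(P)\ne P$. The paper instead handles the odd case exactly as in your parenthetical remark, via Theorem~\ref{notheta} applied to $mP\in\Theta$ with $2(mP)=\iota(P)\in\CC(K)$ for $n=2m+1$, and handles the even case $n=2m$ quite differently: from $2(mP)=0$ it gets $mP=m\,\iota(P)$ in $J(K)$, observes that $m(P)-m(\infty)$ and $m(\iota(P))-m(\infty)$ are both \emph{reduced} (here $m\le g$ and $P$ is not a Weierstrass point), and concludes $P=\iota(P)$ from the uniqueness of reduced representatives. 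Your version is more uniform and self-contained — a single invocation of the Key Lemma, with no appeal to Theorem~\ref{notheta} or to the uniqueness of reduced divisors — while the paper's version stays inside the "reduced divisor" framework it has just set up and showcases Theorem~\ref{notheta}. One small caveat about your closing aside: the claim that for even $n$ the only multiples $kP$ with $2kP\in\CC(K)$ are those with $n\mid 2k$ is not justified (a priori some $2kP$ could land on $\CC$ without being $\pm P$ or $0$); the correct statement is merely that the multiples for which one can \emph{guarantee} $2kP\in\CC(K)$ already lie in $J[2]$, so Theorem~\ref{notheta} gives nothing new — but since this remark is not used in your argument, it does not affect the proof.
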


\begin{proof}
Suppose that such a point say, $P$ exists.  Clearly, $P$ is neither $\infty$ nor one of $\W_i$, i.e., $P \ne \iota(P)$.

Suppose that $n$ is odd. Then we have $n=2m+1$ with $1\le m<g$. This implies that $mP \in \Theta$ and
$$2(mP)=2mP=-P=\iota(P) \in \CC(K).$$
It follows from Theorem \ref{notheta} that either $mP=0$ in $J(K)$ or
$(2m)P=2(mP)=0$ in $J(K)$. However, the order of $P$ in $J(K)$ is 
$n=2m+1>m\ge 1$ and we get a desired contradiction.

Assume now that $n$ is even. Then we have $n=2m$ with $1<m\le g$. Then
$mP$ has order $2$ in $J(K)$. It follows that 
$$mP=-mP=m(-P)=m\ \iota(P).$$
 This means that the  degree zero divisors
$m(P)-m(\infty)$ and $m(\iota(P))-m(\infty)$ belong to the same linear equivalence class. Since both divisors are {\sl reduced}, they must coincide (see \cite[Ch. 13, Prop. 13.6 on p. 413]{Wash}). This implies that $P=\iota(P)$, which is not the case and we get a desired contradiction.
\end{proof}

\begin{rem}
If $\fchar(K)=0$ and $g>1$ then the famous theorem of M. Raynaud (conjectured by Yu.I. Manin and D. Mumford) asserts that  an arbitrary genus $g$ smooth projective curve over $K$ embedded into its jacobian contains only finitely many torsion points \cite{Raynaud}.  Using a $p$-adic approach, B. Poonen \cite{PoonenEXP} developed and implemented  an algorithm that finds all complex torsion points on genus $2$ hyperelliptic curves $\CC:y^2=f(x)$ such that $f(x)$ has rational coefficients. (See also \cite{PoonenStoll}.)
\end{rem}

\begin{thm}
\label{ThetaD}
Suppose that $g>1$ and 
let  $N>1$ be positive integer.  Suppose that $N \le 2g-1$ and let us put
$$d(N)=\left[\frac{2g}{N+1}\right].$$
 Let $K_0$ be a subfield of $K$ such that $f(x)\in K_0[x]$.
Let $\a$ be a $K$-point on $\Theta_{d(N)}$. Suppose that there is a field automorphism $\sigma\in \Aut(K/K_0)$ such that $\sigma(\a)=N\a$ or $-N\a$.
 Then $\a$ has order 1 or 2 in $J(K)$. 
\end{thm}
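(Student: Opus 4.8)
The plan is to deduce the whole statement from the Key Lemma (Lemma \ref{key}), applied to a single, carefully chosen effective divisor whose degree --- thanks to the definition of $d(N)$ --- does not exceed $2g$. We may assume $\a\neq 0$. Let $D_\a-d'(\infty)$ be the reduced representative of $\a$, where $D_\a$ is effective of degree $d'$ and $\infty\notin\supp(D_\a)$. Since $\a\in\Theta_{d(N)}$, it admits an effective representative of degree $\le d(N)$, and reducing it does not increase the degree; hence $d'\le d(N)=\left[2g/(N+1)\right]$ and therefore $(N+1)d'\le 2g$. I would then record the hypothesis $\sigma(\a)=\pm N\a$ in a second effective divisor $D^{*}$ of degree $d'$ with $\infty\notin\supp(D^{*})$ and $\cl\bigl(D^{*}-d'(\infty)\bigr)=-N\a$: take $D^{*}=\sigma(D_\a)$ when $\sigma(\a)=-N\a$, and $D^{*}=\iota\sigma(D_\a)$ when $\sigma(\a)=N\a$ (this uses that $\sigma$ permutes $\CC(K)$ and fixes $\infty$, that $\sigma$ commutes with $\iota$, and that $\iota$ acts as $-1$ on $J$). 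Finally set $E:=N\,D_\a+D^{*}$.

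Then $E$ is effective, $\infty\notin\supp(E)$, its degree $m:=(N+1)d'$ satisfies $0<m\le 2g<2g+1$, and
$$\cl\bigl(E-m(\infty)\bigr)=N\a+(-N\a)=0,$$
so $E-m(\infty)$ is principal. Now Lemma \ref{key} applies to $E$. Part (1) cannot occur: its conclusion (i) would force $m=2g+1$, contradicting $m\le 2g$. Hence $m$ is even and part (2) applies; in particular every point of $\CC(K)$ appears in $E-m(\infty)$ with the same multiplicity as its $\iota$-image, which (since $\iota$ fixes $\infty$) means exactly that $\iota(E)=E$.

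It remains to read off the conclusion from $\iota(E)=E$. Expanding $\iota\bigl(N D_\a+D^{*}\bigr)=N D_\a+D^{*}$ and using $\iota\sigma=\sigma\iota$ and $\iota^{2}=\mathrm{id}$ yields
$$N\bigl(\iota(D_\a)-D_\a\bigr)=\pm\,\sigma\bigl(\iota(D_\a)-D_\a\bigr),$$
with sign $+$ when $\sigma(\a)=N\a$ and $-$ when $\sigma(\a)=-N\a$. Put $G:=\iota(D_\a)-D_\a$ and $\|G\|_{1}:=\sum_{Q}\lvert\mathrm{ord}_{Q}(G)\rvert$. Applying $\sigma$ merely permutes the points of $\CC(K)$, so it leaves $\|G\|_{1}$ unchanged, whereas multiplication by $N$ multiplies it by $N$; the relation above therefore gives $N\|G\|_{1}=\|G\|_{1}$, and since $N\ge 2$ we get $G=0$, i.e. $D_\a=\iota(D_\a)$. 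As $D_\a-d'(\infty)$ is reduced --- no point and its $\iota$-image both occur, and each $\W_i$ occurs with multiplicity at most $1$ --- the equality $D_\a=\iota(D_\a)$ is possible only if every point of $\supp(D_\a)$ is a fixed point of $\iota$ other than $\infty$, i.e. some $\W_i$, occurring with multiplicity $1$. Hence $\a=\sum_{i\in S}\W_i$ for a subset $S\subseteq\{1,\dots,2g+1\}$, a sum of points of order $2$, so $2\a=0$ and $\a$ has order $1$ or $2$.

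The main obstacle is the construction of $E$: one has to correct $N D_\a$ by a degree-$d'$ divisor built out of $\sigma$ and $\iota$ so that the sum is at once effective, principal modulo multiples of $(\infty)$, and of degree $\le 2g$ --- this last property being precisely the point of the exponent $d(N)=\left[2g/(N+1)\right]$ --- so that Lemma \ref{key}(2) applies and forces the rigidity $\iota(E)=E$. Once $\iota(E)=E$ is available, the scaling argument for $\|\cdot\|_{1}$ and the definition of a reduced divisor finish the proof routinely; the only bookkeeping that needs care is the identity $\cl\bigl(D^{*}-d'(\infty)\bigr)=-N\a$ in both sign cases, together with the standard fact that reducing an effective representative of $\a$ cannot increase its degree.
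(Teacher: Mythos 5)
Your proof is correct and follows essentially the same route as the paper: you apply the Key Lemma to exactly the same auxiliary divisor $N D_{\a}+\iota\sigma(D_{\a})$ (resp.\ $N D_{\a}+\sigma(D_{\a})$), of degree $(N+1)d'\le 2g$, and exploit the $\iota$-invariance it forces. The only difference is the endgame: where the paper assumes $2\a\ne 0$ and gets a contradiction by comparing multiplicities at a non-Weierstrass point of maximal multiplicity, you deduce $\iota(D_{\a})=D_{\a}$ directly via the $\ell^1$-norm scaling argument and then read off from reducedness that $D_{\a}$ is a multiplicity-one sum of Weierstrass points --- a harmless, slightly cleaner variant of the same idea.
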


\begin{proof}
Clearly, $(N+1)\cdot d(N)<2g+1$.
Let us assume that  $2\a\ne 0$ in $J(K)$. We need  to arrive to a contradiction.
Then there is a positive integer $r\le d(N)$ and a sequence of points $P_1, \dots, P_r$ of $\CC(K)\setminus{\infty}$ such that
 $\tilde{D}:=\sum_{j=1}^r(P_j)-r(\infty)$ is the Mumford's representation of $\a$ while (say) $P_1$ does {not} coincides with 
any of $W_i$ (here we use the assumption that $2\a \ne 0$); we may also assume that $P_1$ has the largest {\sl multiplicity} 
 say, $M$ among $\{P_1, \dots, P_r\}$. (In particular, none of $P_j$'s coincides with $\iota(P_1)$.)
Then $\sigma(\tilde{D})=\sum_{j=1}^r(\sigma(P_j))-r(\infty)$ is the Mumford's representation of $\sigma\a$. In particular,  the multiplicity of each $\sigma(P_j)$ in  $\sigma(\tilde{D})$ does {\sl not} exceed $M$; similarly, the multiplicity of each $\iota\sigma(P_j)$ in  $\iota\sigma(\tilde{D})$ does {\sl not} exceed $M$.

Suppose that $\sigma(\a)=N\a$.
$$N\tilde{D}+\iota\sigma(\tilde{D}) =N\left[\sum_{j=1}^r(P_j)\right]+\left[\sum_{j=1}^r(\iota\sigma(P_j))\right]-r(N+1)(\infty)$$
is a principal divisor on $\CC$. Since $m:=r(N+1) \le (N+1)\cdot d(N)<2g+1$,
 we are in position to apply Lemma \ref{key}, which tells us right away that $m$ is {\sl even} 
and there is a monic polynomial $u(x)$ of degree $m/2$, whose divisor coincides with $N\tilde{D}+\iota\sigma(\tilde{D})$. This implies that  a point $Q\in \CC(K)$ appears in $N\tilde{D}+\iota\sigma(\tilde{D})$ with the same multiplicity as $\iota{Q}$. It follows that $\iota{P_1}$ is (at least) one of $\iota\sigma(P_j)$'s.
Clearly, the multiplicity of $P_1$ in $N\tilde{D}+\iota\sigma(\tilde{D})$ is, at least, $NM$
 while the multiplicity of $\iota(P_1)$ is, at most, $M$. This implies that $NM\le M$. Taking into account that $N>1$,
 we obtain the desired contradiction. 

If $\sigma(\a)=-N\a$ then literally the same arguments applied to to the principal divisor
$$N\tilde{D}+\sigma(\tilde{D}) =N\left[\sum_{j=1}^r(P_j)\right]+\left[\sum_{j=1}^r(\sigma(P_j))\right]-r(N+1)(\infty)$$
also lead to the contradiction.
\end{proof}

\section{Division by 2}
\label{division}
Suppose we are given a point
$$P=(a,b) \in \CC(K) \subset J(K).$$
Since $\dim(J)=g$, 
there are exactly $2^{2g}$ points $\a \in J(K)$ such that
$$P=2\a \in J(K).$$
Let us choose such an $\a$.
Then there is exactly one effective divisor 
$$D=D(\a)\eqno(1)$$
 of positive degree $m$ on $\CC$ such that  $\supp(D)$ does {\sl not} contain $\infty$, the divisor $D-m(\infty)$ is reduced, and
 $$m \le g, \ \cl(D-m (\infty))=\a.$$ It follows that the divisor $2D+(\iota(P))-(2m+1)(\infty)$ is {\sl principal} and, thanks to  Corollary \ref{bytwo},  $m=g$ and $\supp(D)$ does {\sl not} contains 
  any of $\W_i$. (In addition, $D-g(\infty)$ is reduced.) 
Then the degree $g$ effective divisor 
 $$D=D(\a)=\sum_{j=1}^{g}(Q_j)\eqno(2)$$
    with
$Q_i=(c_j,d_j)\in \CC(K)$.  Since none of $Q_j$ coincides with any of $\W_i$,
$$c_j \ne \alpha_i \ \forall i,j.$$
By  Corollary \ref{bytwo}, there is a polynomial $v_D(x)$ of degree $ \le g$ such that the degree zero divisor $$2D+(\iota(P))-(2g+1)(\infty)$$ is the divisor of $y-v_D(x)$. Since the points $\iota(P)=(a,-b)$  and all $Q_j$'s  are zeros of $y-v_D(x)$,
$$b=-v_D(a), \ d_j=v_D(c_j) \ \forall j=1, \dots , g.$$
It follows from Proposition 13.2 on pp. 409--410 of \cite{Wash} that 
$$\prod_{i=1}^{2g+1}(x-\alpha_i)-v_D(x)^2=f(x)-v_D(x)^2=(x-a)\prod_{j=1}^g (x-c_j)^2.\eqno(3)$$
In particular, $f(x)-v_D(x)^2$ is divisible by
$$u_D(x):=\prod_{j=1}^g (x-c_j).\eqno(4)$$

\begin{rem}
Summing up:
$$D=D(\a)=\sum_{j=1}^{g}(Q_j), \ Q_j=(c_j,v_D(c_j)) \ \forall j=1, \dots , g$$
and the dgree $g$ monic polynomial $u_D(x)=\prod_{j=1}^g (x-c_j)$ divides $f(x)-v_D(x)^2$.  By Prop. 13.4 on p. 412 of \cite{Wash}, this implies that reduced $D-g(\infty)$ coincides with the gcd of the divisors of $u_D(x)$ and  $y-v_D(x)$. 
Therefore
the  pair $(u_D,v_D)$ is the Mumford's representation of $\a$
if $$\deg(v_D)<g=\deg(u_D).$$ This is not always the case: it may happen that $\deg(v_D)=g=\deg(u_D)$ (see below). However, if we replace $v_D(x)$ by its remainder with respect to the division by $u_D(x)$ then we get the Mumford's representation  of $\a$ (see below).
\end{rem}

If in (3) we put $x=\alpha_i$ then we get
$$-v_D(\alpha_i)^2=(\alpha_i-a)\left(\prod_{j=1}^g(\alpha_i-c_j)\right)^2,$$
i.e.,
$$v_D(\alpha_i)^2=(a-\alpha_i)\left(\prod_{j=1}^g(c_j-\alpha_i)\right)^2 \ \forall \ i=1, \dots , 2g+1.$$
Since none of $c_j-\alpha_i$ vanishes, we may define
$$r_i=r_{i,D}:=\frac{v_D(\alpha_i)}{\prod_{j=1}^g(c_j-\alpha_i)} \eqno(5)$$
with
$$r_i^2=a-\alpha_i \ \forall \ i=1, \dots , 2g+1 \eqno(6)$$
and
$$\alpha_i= a-r_i^2, \  c_j-\alpha_i=r_i^2-a+c_j \ \forall \ i=1, \dots , 2g+1; j=1, \dots , g.$$
Clearly, all $r_i$'s are {\sl distinct} elements of $K$, because their squares are obviously distinct. (By the same token, $r_{j_1} \ne \pm r_{j_2}$ if $j_1\ne j_2$. Notice that
$$\prod_{i=1}^{2g+1}r_i= \pm b, \eqno(7)$$
because
$$b^2=\prod_{i=1}^{2g+1}(a-\alpha_i)=\prod_{i=1}^{2g+1} r_i^2.)\eqno(8)$$
Now we get
$$r_i=\frac{v_D(a-r_i^2)}{\prod_{j=1}^g(r_i^2-a+c_j)},$$
i.e.,
$$r_i \prod_{j=1}^g(r_i^2-a+c_j)-v_D(a-r_i^2)=0 \ \forall \ i=1, \dots 2g+1.$$
This means that the degree $(2g+1)$ {\sl monic} polynomial (recall that $\deg(v_D)\le g$)
$$h_{\mathbf{r}}(t):=t \prod_{j=1}^g (t^2-a+c_j) -v(a-t^2)$$
has $(2g+1)$ {\sl distinct} roots $r_1, \dots, r_{2g+1}$. This means that
$$h_{\mathbf{r}}(t)= \prod_{i=1}^{2g+1}(t-r_i).$$
Clearly, $t \prod_{j=1}^g (t^2-a+c_i)$ coincides with the {\sl odd part} of $h_{\mathbf{r}}(t)$ while $-v_D(a-t^2)$ coincides with the {\sl even part} of $h_{\mathbf{r}}(t)$. In particular, if we put $t=0$ then we get
$$ (-1)^{2g+1}\prod_{i=1}^{2g+1}r_i=-v_D(a)=b,$$
i.e.,
$$\prod_{i=1}^{2g+1}r_i=- b.\eqno(9)$$
Let us define 
$$\mathbf{r}=\mathbf{r}_D:=(r_1, \dots , r_{2g+1}) \in K^{2g+1}.$$
Since $$\mathbf{s}_i(\mathbf{r})=\mathbf{s}_i(r_1, \dots , r_{2g+1})$$  is the $i$th basic symmetric function in 
$r_1, \dots , r_{2g+1}$, 
$$h_{\mathbf{r}}(t)=t^{2g+1}+\sum_{i=1}^{2g+1} (-1)^{i}\mathbf{s}_i(\mathbf{r}) t^{2g+1-i}=[t^{2g+1}+\sum_{i=1}^{2g} (-1)^{i}s_i(\mathbf{r}) t^{2g+1-i}] +b.$$
Then
$$t\prod_{j=1}^g (t^2-a+c_j)=t^{2g+1}+\sum_{j=1}^g \mathbf{s}_{2j}(\mathbf{r})t^{2g+1-2j},$$
$$-v_D(a-t^2)=[-\sum_{j=1}^{g} \mathbf{s}_{2j-1}(\mathbf{r}) t^{2g-2j+2}]+b.$$
It follows that
$$\prod_{j=1}^g (t-a+c_j)=t^g+\sum_{j=1}^g \mathbf{s}_{2j-1}(\mathbf{r})t^{g-j},$$
$$v_D(a-t)=\sum_{j=1}^{g} \mathbf{s}_{2j-1}(\mathbf{r}) t^{g-j+1}-b.$$
This implies that 
$$v_D(t)=\left[\sum_{j=1}^{g} \mathbf{s}_{2j-1}(\mathbf{r}) (a-t)^{g-j+1}\right]-b.\eqno(10)$$
It is also clear that if we consider the degree $g$ monic polynomial
$$U_{\mathbf{r}}(t):=u_D(t)=\prod_{j=1}^g (t-c_j)$$
then
$$
U_{\mathbf{r}}(t)=(-1)^g \left[(a-t)^g+\sum_{j=1}^g \mathbf{s}_{2j}(\mathbf{r})(a-t)^{g-j}\right]. \eqno(11)
$$

 Recall that $\deg(v_D) \le g$ and notice that the coefficient of $v(x)$ at $x^g$ is $(-1)^{g}\mathbf{s}_1(\mathbf{r})$. This implies that  the polynomial
$$V_{\mathbf{r}}(t):=v_D(t)-(-1)^{g}\mathbf{s}_1(\mathbf{r}) U_{\mathbf{r}}(t)=$$
$$\left[\sum_{j=1}^{g} \mathbf{s}_{2j-1}(\mathbf{r}) (a-t)^{g-j+1}\right]-b
-
\mathbf{s}_1(\mathbf{r})\left[(a-t)^g+\sum_{j=1}^g \mathbf{s}_{2j}(\mathbf{r})(a-t)^{g-j}\right]
\eqno(12)
$$
has degree $<g$, i.e.,  
$$\deg(V_{\mathbf{r}})<\deg(U_{\mathbf{r}})=g.$$
 Clearly, $f(x) - V_{\mathbf{r}}(x)^2$ is still divisible by $U_{\mathbf{r}}(x)$, because $u_D(x)=U_{\mathbf{r}}(x)$ divides both
$f(x)-v_D(x)^2$ and $v_D(x)- V_{\mathbf{r}}(x)$.
On the other hand,
$$d_j=v_D(c_j)=V_{\mathbf{r}}(c_j) \ \forall j=1, \dots g,$$
because $U_{\mathbf{r}}(x)$ divides  $v_D(x)- V_{\mathbf{r}}(x)$ and vanishes at all $b_j$. Actually, $\{b_1, \dots, b_g\}$ is the list of all roots (with multiplicities) of $U_{\mathbf{r}}(x)$. So,
$$D=D(\a)=\sum_{j=1}^{g}(Q_j), \ Q_j=(c_j,v_D(c_j))=(c_j,V_{\mathbf{r}}(c_j))  \ \forall j=1, \dots , g.$$
This implies (again via Prop. 13.4 on p. 412 of \cite{Wash}) that reduced $D-g (\infty)$ coincides with the gcd of the divisors of $U_{\mathbf{r}}(x)$ and  $y-V_{\mathbf{r}}(x)$. It follows that  the pair  $(U_{\mathbf{r}}(x), V_{\mathbf{r}}(x))$ is the {\sl Mumford's representation}  of  $\cl(D-g (\infty))=\a$. 
So,  the formulas (11) and (12) give us an explicit construction of ($D(\a)$ and) $\a$ in terms of $\mathbf{r}=(r_1, \dots, , r_{2g+1})$ for each of $2^{2g}$ choices of $\a$ with $2\a=P\in J(K)$.  
On the other hand,  in light of (6)-(8), there is exactly the same number $2^{2g}$ of  choices of  square roots $\sqrt{a-\alpha_i}$ ($1\le i \le 2g$), whose product is $-b$. Combining it with (9), we obtain  that for each choice
of square roots $\sqrt{a-\alpha_i}$'s  with $\prod_{i=1}^{2g+1}\sqrt{a-\alpha_i}=-b$ there is precisely one $\a \in J(K)$ with $2\a=P$ such that the corresponding $r_i$ defined by (5) coincides with  chosen $\sqrt{a-\alpha_i}$ for all $i=1, \dots , 2g+1$, and the Mumford's  representation $(U_{\mathbf{r}}(x), V_{\mathbf{r}}(x))$  for this $\a$ is given by explicit formulas (11)-(12). This gives us the following assertion.

\begin{thm}
\label{main}
Let $P=(a,b)\in \CC(K)$. Then the $2^{2g}$-element set 
$$M_{1/2,P}:=\{\a \in J(K)\mid 2\a=P\in \CC(K)\subset J(K)\}$$
can be described as follows.  Let $\RR_{1/2,P}$ be the set of all $(2g+1)$-tuples $\rr=(\rr_1, \dots , \rr_{2g+1})$ of elements of $K$ such that
$$\rr_i^2=a-\alpha_i \ \forall \ i=1, \dots , 2g+1;  \ \prod_{i=1}^{2g+1}\rr_i=-b.$$ 
Let $\mathbf{s}_i(\rr)$ be the $i$th basic symmetric function in  $\rr_1, \dots , \rr_{2g+1}$. Let us put
$$
U_{\rr}(x)=(-1)^g \left[(a-x)^g+\sum_{j=1}^g \mathbf{s}_{2j}(\rr)(a-x)^{g-j}\right],$$
$$V_{\rr}(x)=\left[\sum_{j=1}^{g} \mathbf{s}_{2j-1}(\rr) (a-x)^{g-j+1}\right]-b
-
\mathbf{s}_1(\rr)\left[(a-x)^g+\sum_{j=1}^g \mathbf{s}_{2j}(\rr)(a-x)^{g-j}\right].$$
Then there is a natural bijection between  $\RR_{1/2,P}$ and $M_{1/2,P}$ such that $\rr \in  \RR_{1/2,P}$  corresponds to $\a_{\rr}\in M_{1/2,P}$ with Mumford's representation  $(U_{\rr},V_{\rr})$.  More explicitly, if $\{c_1, \dots, c_g\}$ is the list of $g$ roots (with multiplicities) of $U_{\rr}(x)$ then $\rr$ corresponds to
$$\a_{\rr}=\cl(D-g(\infty))\in J(K), \ 2\a_{\rr}=P$$
where the divisor
$$D=D(\a_{\rr})=\sum_{j=1}^g (Q_j), \ Q_j=(b_j,V_{\rr}(b_j))\in \CC(K) \ \forall \ j=1, \dots, g.$$
In addition, none of $\alpha_i$ is a root of $U_{\rr}(x)$ (i.e., the polynomials $U_{\rr}(x)$ and $f(x)$ are relatively prime) and
$$\rr_i=\mathbf{s}_1(\rr)+(-1)^g\frac{V_{\rr}(\alpha_i)}{U_{\rr}(\alpha_i)}
 \ \forall \ i=1, \dots , 2g+1.$$
\end{thm}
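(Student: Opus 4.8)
The plan is to package the chain of identities $(3)$--$(12)$ derived above into the asserted bijection. Two elementary counts set the stage. First, since $\fchar(K)\ne 2$, multiplication by $2$ on the $g$-dimensional abelian variety $J$ is a finite \'etale isogeny of degree $2^{2g}$, so $\#M_{1/2,P}=2^{2g}$. Second, after fixing one square root of each $a-\alpha_i$, the $(2g+1)$-tuples $\rr$ with $\rr_i^2=a-\alpha_i$ are parametrized by $\{\pm1\}^{2g+1}$; if $b\ne0$ all $\rr_i$ are nonzero and the condition $\prod_i\rr_i=-b$ cuts out a coset of index $2$, while if $b=0$ then $a=\alpha_{i_0}$ for a unique $i_0$, forcing $\rr_{i_0}=0$ and leaving the other $2g$ signs unconstrained; in either case $\#\RR_{1/2,P}=2^{2g}$. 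Now define $\Phi\colon M_{1/2,P}\to\RR_{1/2,P}$ by $\Phi(\a)=\rr_D$, where $D=D(\a)$ is the reduced divisor of $(1)$--$(2)$ and the entries $\rr_{i,D}$ of $\rr_D$ are given by $(5)$.

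To see that $\Phi$ is well defined, recall from Corollary \ref{bytwo} that $2D+(\iota(P))-(2g+1)(\infty)$ is principal and equals the divisor of $y-v_D(x)$ with $\deg v_D\le g$, and that $\supp(D)$ contains none of the $\W_i$; hence the denominators in $(5)$ do not vanish, and the relations $(6)$ and $(9)$ established in the computation above are precisely the conditions that $\rr_D\in\RR_{1/2,P}$. Next, $\Phi$ is injective: the symmetric functions $\mathbf{s}_1(\rr_D),\dots,\mathbf{s}_{2g}(\rr_D)$ recover $v_D$ and $u_D$ through the formulas $(10)$ and $(11)$, which come from the odd/even-part decomposition of the monic polynomial $t\prod_{j}(t^2-a+c_j)-v_D(a-t^2)$; and $u_D,v_D$ in turn determine $D=\sum_{j}(c_j,v_D(c_j))$, hence $\a=\cl(D-g(\infty))$. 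An injection between two sets of the same finite cardinality is a bijection, so $\Phi$ is bijective; write $\a_{\rr}:=\Phi^{-1}(\rr)$.

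It remains to read off the Mumford representation. Fix $\rr\in\RR_{1/2,P}$ and set $\a_{\rr}=\Phi^{-1}(\rr)$, $D=D(\a_{\rr})$, so that $\rr=\rr_D$ by construction. Then $(11)$ gives $U_{\rr}=u_D$ and $(12)$ gives $V_{\rr}=v_D-(-1)^g\mathbf{s}_1(\rr)U_{\rr}$; as verified above, $\deg V_{\rr}<\deg U_{\rr}=g$, the polynomial $U_{\rr}$ divides $f-V_{\rr}^2$, and the gcd of the divisors of $U_{\rr}$ and $y-V_{\rr}$ is $D-g(\infty)$, so by Prop.\ 13.4 of \cite{Wash} the pair $(U_{\rr},V_{\rr})$ is the Mumford representation of $\a_{\rr}$; the description of $D$ in terms of the roots $c_1,\dots,c_g$ of $U_{\rr}$ follows, and $\gcd(U_{\rr},f)=1$ since $\supp(D)$ avoids the $\W_i$. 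The last displayed identity is immediate from $(5)$: using $\prod_j(c_j-\alpha_i)=(-1)^gU_{\rr}(\alpha_i)$ and $v_D=V_{\rr}+(-1)^g\mathbf{s}_1(\rr)U_{\rr}$, division yields $\rr_i=\mathbf{s}_1(\rr)+(-1)^gV_{\rr}(\alpha_i)/U_{\rr}(\alpha_i)$.

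There is no genuine obstacle here: all the analytic content sits in the identities $(3)$--$(12)$ already in hand, and what remains is the bookkeeping of assembling them into a bijection. The two points that need a little care are the well-definedness of $\Phi$ in the degenerate case $b=0$ (when $P$ is a point of order $2$), and keeping straight which symmetric functions of $\rr_D$ reconstruct $u_D$ and which reconstruct $v_D$.
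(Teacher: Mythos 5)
Your proposal is correct and follows essentially the same route as the paper: the paper's own proof consists of the identities (3)--(12) plus the counting argument that both $M_{1/2,P}$ and $\RR_{1/2,P}$ have $2^{2g}$ elements, with the displayed formula for $\rr_i$ verified at the end exactly as you do. Your only additions are to make explicit the injectivity of $\a\mapsto\rr_{D(\a)}$ (via reconstruction of $u_D,v_D$ from the symmetric functions) and the cardinality count of $\RR_{1/2,P}$ in the degenerate case $b=0$, both of which the paper leaves implicit; these are accurate refinements, not a different method.
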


\begin{proof}
Actually we have already proven all the assertions of Theorem \ref{main} except the last formula for $\rr_i$. It follows from (4) and (5) that
$$\rr_i=(-1)^g \frac{v_{D(\a_{\rr})}(\alpha_i)}{u_{D(\a_{\rr})}(\alpha_i)}=
(-1)^g \frac{v_{D(\a_{\rr})}(\alpha_i)}{U_{\rr}(\alpha_i)}.$$
It follows from (12) that
$$v_{D(\a_{\rr})}(x)=(-1)^{g}\mathbf{s}_1(\rr) U_{\rr}(x)+V_{\rr}(x).$$
This implies that
$$\rr_i=(-1)^g\frac{(-1)^{g}\mathbf{s}_1(\rr) U_{\rr}(\alpha_i)+V_{\rr}(\alpha_i)}{U_{\rr}(\alpha_i)}=\mathbf{s}_1(\rr)+(-1)^g\frac{V_{\rr}(\alpha_i)}{U_{\rr}(\alpha_i)}.$$
\end{proof}

\begin{ex}
Let us take as $P=(a,b)$ the point $\W_{2g+1}=(\alpha_{2g+1},0)$.  Then $b=0$ and  $\rr_{2g+1}=0$. We have $2g$ arbitrary independent choices of (nonzero) square roots $\rr_j=\sqrt{\alpha_{2g+1}-\alpha_j}$ with $1 \le j \le 2g$ (and always get an element of $\RR_{1/2,P}$).  Now Theorem \ref{main} gives us (if we put $a=\alpha_{2j+1},b=0$)  all $2^{2g}$  points $\a_{\rr}$ of order 4 in $J(K)$ with $2\a_{\rr}=\W_{2j+1}$.
\end{ex}

 \section{Rationality Questions}
 \label{rat}
 Let $K_0$ be a subfield of $K$ and $K_0^{\sep}$ its separable algebraic closure in $K$.   Recall that $K_0^{\sep}$  is separably closed.
 Clearly,
 $$\fchar(K_0)=\fchar(K_0^{\sep})=\fchar(K) \ne 2.$$
  Let us assume that $f(x)\in K_0[x]$, i.e., all the coefficients of $f(x)$ lie in $K_0$. However, we don't make any additional assumptions about its roots $\alpha_j$; still, all of them lie in $K_0^{\sep}$, because $f(x)$ has no multiple roots. Recall that both $\CC$ and $J$ are defined over $K_0$; the point 
 $\infty \in \CC(K_0)$ and therefore the embedding $\CC\hookrightarrow J$ is defined over $K_0$; in particular, $\CC$ is a closed algebraic $K_0$-subvariety of $J$.
 
 Let  us assume that our $K$-point $P=(a,b)$ of $\CC$  lies in $\CC(K_0^{\sep})$, i.e., $a,b\in K_0^{\sep}$ and
 $$P=(a,b)\in \CC(K_0^{\sep})\subset J(K_0^{\sep})\subset J(K).$$
 In the notation of Theorem \ref{main},  for each $\rr\in M_{1/2,P}$
 all its components $\rr_i$ lie in $K_0^{\sep}$, because $\rr_i^2=a-\alpha_i \in K_0^{\sep}$. This implies that 
  the monic degree $2g+1$ polynomial
 $$h_{\rr}(t)=\prod_{i=1}^{2g+1}(t-\rr_i)=t^{2g+1}+\sum_{i=1}^{2g}(-1)^i \mathbf{s}_i(\rr)t^{2g+1-i} \in K_0^{\sep}[t],$$
 i.e., all  $\mathbf{s}_i(\rr) \in K_0^{\sep}$.
 It follows immediately from the explicit formulas above that the Mumford representation $(U_{\rr},V_{\rr})$ of $\a_r=\cl(D(\a_r)-g(\infty))$ consists of polynomials  $U_{\rr}$ and $V_{\rr}$ with coefficients in $K_0^{\sep}$. In addition, $\a_{\rr}$ lies in $J(K_0^{\sep})$, because $2\a_{\rr}=P \in J(K_0^{\sep})$, the multiplication by $2$ in $J$ is an \'etale map and $K_0^{\sep}$ is separably closed.

\begin{lem}
\label{sep}
 Suppose that either $K_0$ is a perfect field (e.g., $\fchar(K)=0$ or $K_0$ is finite) or $\fchar(K_0)>g$.  Suppose that 
 $$P=(a,b) \in \CC(K_0^{\sep})\subset J(K_0^{\sep}).$$
 Then for all  $\rr\in \RR_{1/2,P}$ the Mumford representation $(U_{\rr},V_{\rr})$ of $\a_r=\cl(D(\a_r)-g(\infty))$ enjoys the following properties.
 
 \begin{itemize}
 \item[(i)]
The polynomial  $U_{\rr}(x)$ splits over $K_0^{\sep}$, i.e., all its roots $b_j$  lie  in $K_0^{\sep}$.
\item[(ii)]
 The  divisor
 $$D=D(\a_{\rr})=\sum_{j=1}(Q_j)$$
 where
 $$Q_j=(c_j,V_{\rr}(c_j))\in \CC(K_0^{\sep}) \ \forall \ j=1, \dots , g.$$
 \end{itemize}
\end{lem}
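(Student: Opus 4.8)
The plan is to show that $U_{\rr}(x)$ and $V_{\rr}(x)$ already have coefficients in $K_0^{\sep}$ (which was established in the paragraph immediately preceding the lemma), and then to promote this to a statement about the \emph{roots} $c_j$ of $U_{\rr}(x)$ under the stated hypothesis on $K_0$. The key point is that a monic polynomial with coefficients in a separably closed field splits over that field precisely when it has no repeated irreducible factor of inseparable type; so it suffices to know that $U_{\rr}(x)$ is a \emph{separable} polynomial, or more precisely that each of its irreducible factors over $K_0^{\sep}$ is separable. Thus the whole lemma reduces to controlling the possible inseparability of $U_{\rr}$.

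First I would recall, from (3) and (4), that $U_{\rr}(x) = u_D(x) = \prod_{j=1}^g (x - c_j)$ and that $f(x) - V_{\rr}(x)^2$ is divisible by $U_{\rr}(x)$; indeed $f(x) - v_D(x)^2 = (x-a)\, U_{\rr}(x)^2$ from (3), and replacing $v_D$ by $V_{\rr}$ changes this only by a multiple of $U_{\rr}$. The degree of $U_{\rr}$ is exactly $g$. Now if $\fchar(K_0) = 0$ or $K_0$ is perfect, then $K_0^{\sep} = \overline{K_0}$ is algebraically closed, so $U_{\rr}(x)$ automatically splits over $K_0^{\sep}$ and there is nothing to prove for (i); moreover $V_{\rr}(c_j)\in K_0^{\sep}$ follows since $c_j \in K_0^{\sep}$ and $V_{\rr}$ has coefficients in $K_0^{\sep}$, giving (ii). So the only substantive case is $\fchar(K_0) = p > g$. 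Here I would argue that any irreducible factor of $U_{\rr}(x)$ over $K_0^{\sep}$ that were inseparable would have degree divisible by $p > g = \deg(U_{\rr})$, which is impossible. Hence every irreducible factor is separable, and since $K_0^{\sep}$ is separably closed, every such factor is linear; that is, $U_{\rr}$ splits over $K_0^{\sep}$, proving (i). Then (ii) is immediate: each $c_j \in K_0^{\sep}$, and $V_{\rr}$ has coefficients in $K_0^{\sep}$, so $Q_j = (c_j, V_{\rr}(c_j)) \in \CC(K_0^{\sep})$, and $D = D(\a_{\rr}) = \sum_{j=1}^g (Q_j)$ by Theorem \ref{main}.

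The main obstacle, such as it is, is to make precise \emph{why} $\fchar(K_0) > g$ forces separability: the clean statement is that a monic polynomial over a field of characteristic $p$ all of whose irreducible factors have degree $\le g < p$ is automatically separable, because an inseparable irreducible polynomial is a polynomial in $x^p$ and hence has degree $\ge p$. One should double-check the degenerate possibility that $U_{\rr}$ itself is constant or that some $c_j$ coincide — but repeated roots are harmless here, since we only need that the \emph{distinct} roots lie in $K_0^{\sep}$, which follows from the same irreducible-factor argument. A minor point worth a sentence: one could also derive (i) more directly from the fact that $\a_{\rr} \in J(K_0^{\sep})$, together with the uniqueness of Mumford's representation and the compatibility of the $\Gal(K_0^{\sep}/K_0)$-action with it, but the characteristic/degree count above is the most economical route and is exactly what the hypothesis $\fchar(K_0) > g$ is designed to supply.
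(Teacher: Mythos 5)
Your proposal is correct and takes essentially the same approach as the paper: the perfect case is dismissed because $K_0^{\sep}$ is then algebraically closed, and in characteristic $p>g$ your observation that an inseparable irreducible factor of $U_{\rr}$ would have degree divisible by $p$, hence exceed $\deg(U_{\rr})=g$, is just a rephrasing of the paper's degree count showing each extension $K_0^{\sep}(c_j)/K_0^{\sep}$ has degree at most $g<p$, hence is separable and therefore trivial. Part (ii) is handled identically in both arguments.
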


\begin{proof}
If $K_0$ is perfect then $K_0^{\sep}$ is algebraically closed and there is nothing to prove. So, we may assume that $\fchar(K_0^{\sep}) =\fchar(K_0)>g$.
In order to prove (i), recall that $\deg(U_{\rr})=g$. Every root $c_j$ of $U_{\rr}(x)$ lies in $K$ and the algebraic  field extension $K_0^{\sep}(b_j)/K_0^{\sep}$ has finite degree that does not exceed
$$\deg(U_{\rr})=g<\fchar(K_0^{\sep})$$
and therefore this degree is {\sl not} divisible by $\fchar(K_0^{\sep})$. This implies that the field extension $K_0^{\sep}(b_j)/K_0^{\sep}$ is separable. Since $K_0^{\sep}$ is separably closed, the overfield $K_0^{\sep}(c_j)=K_0^{\sep}$, i.e., 
  $c_j$ lies  in $K_0^{\sep}$. This proves (i). As for (ii), since   $V_{\rr}(x)\in K_0^{\sep}[x]$ and all $c_j\in K_0^{\sep}$, we have $V_{\rr}(c_j)\in K_0^{\sep}$ and therefore $Q_j=(c_j,V_{\rr}(c_j))\in \CC(K_0^{\sep})$. This proves (ii).
\end{proof}

\begin{rem}
If $g=2$ then the conditions of Lemma \ref{sep} do not impose any additional restrictions on $K_0$. (The case $\fchar(K)=2$ was excluded from the very beginning.)
\end{rem}
 
 \begin{rem}
If $P=(a,b)\in \CC(K_0)$ then for each  $\rr\in \RR_{1/2,P}$
$$\mathbf{s}_{2g+1}(\rr)=(-1)^{2g+1}\prod_{i=1}^{2g+1}\rr_i=-\prod_{i=1}^{2g+1}\rr_i=-(-b)=b\in K_0.$$
This observation (reminder) explains the omission of $i=2g+1$ in the following statement.
\end{rem}

 \begin{thm}
 \label{mainR}
 Suppose that a point
 $$P =(a,b) \in \CC(K_0)\subset J(K_0),$$
 i.e.,
 $$a,b \in K_0, \ b^2=f(a).$$
 If $\rr$ is an element of $\RR_{1/2,P}$ then $\a_{\rr}$ lies in $J(K_0)$ if and only if
 $h_{\rr}(t)$ lies in $K_0[t]$, i.e.,
 $$\mathbf{s}_i(\rr) \in K_0 \ \forall \ i=1, \dots , 2g.$$
\end{thm}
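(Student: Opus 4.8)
The plan is to prove both implications using the description of $\a_\rr$ and its Mumford representation from Theorem \ref{main}, together with the Galois-descent characterization of $K_0$-rationality. First I would set up the Galois action: since $f(x)\in K_0[x]$ and $P=(a,b)\in\CC(K_0)$, the group $\Gal(K_0^{\sep}/K_0)$ acts on everything in sight, and by the discussion preceding the theorem all the relevant objects ($\rr$, the $\mathbf{s}_i(\rr)$, the polynomials $U_\rr,V_\rr$, the class $\a_\rr$) are defined over $K_0^{\sep}$. A point of $J(K_0^{\sep})$ lies in $J(K_0)$ if and only if it is fixed by every $\sigma\in\Gal(K_0^{\sep}/K_0)$; likewise $h_\rr(t)\in K_0[t]$ iff every such $\sigma$ fixes all its coefficients $(-1)^i\mathbf{s}_i(\rr)$, equivalently fixes all $\mathbf{s}_i(\rr)$ for $i=1,\dots,2g$ (the coefficient $\mathbf{s}_{2g+1}(\rr)=b$ is automatically in $K_0$, as noted in the preceding remark).

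Next I would analyze how $\sigma$ acts. Because $\sigma$ permutes the roots $\alpha_i$ of $f$ and fixes $a$, it sends the tuple $\rr=(\rr_1,\dots,\rr_{2g+1})$ with $\rr_i^2=a-\alpha_i$, $\prod\rr_i=-b$ to another tuple $\sigma(\rr)$ with $\sigma(\rr)_i^2 = a-\sigma^{-1}(\alpha_i)$ (a permuted-and-possibly-sign-changed version of $\rr$), and since $\sigma(-b)=-b$ this $\sigma(\rr)$ is again an element of $\RR_{1/2,P}$. The key structural fact is that the correspondence $\rr\mapsto\a_\rr$ of Theorem \ref{main} is $\Gal(K_0^{\sep}/K_0)$-equivariant: $\sigma(\a_\rr)=\a_{\sigma(\rr)}$. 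This follows because $\a_\rr$ was constructed from $\rr$ by algebraic operations with coefficients in $K_0$ (the formulas for $U_\rr,V_\rr$ in terms of the $\mathbf{s}_i(\rr)$ and $a,b$), or alternatively because the divisor $D(\a_\rr)$ and the defining property $2\a_\rr=P$ are characterized geometrically over $K_0$. Now the crucial injectivity input: by Theorem \ref{main} the map $\rr\mapsto\a_\rr$ is a \emph{bijection} $\RR_{1/2,P}\to M_{1/2,P}$, so $\sigma(\a_\rr)=\a_\rr$ holds if and only if $\a_{\sigma(\rr)}=\a_\rr$, which by injectivity holds if and only if $\sigma(\rr)=\rr$.

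From here both directions fall out. If $\a_\rr\in J(K_0)$, then $\sigma(\rr)=\rr$ for all $\sigma$, hence each $\rr_i\in K_0^{\sep}$ is Galois-fixed, so in fact all $\rr_i\in K_0$, whence trivially $h_\rr(t)=\prod_{i=1}^{2g+1}(t-\rr_i)\in K_0[t]$ and all $\mathbf{s}_i(\rr)\in K_0$. Conversely, if all $\mathbf{s}_i(\rr)\in K_0$ for $i=1,\dots,2g$, then (adding $\mathbf{s}_{2g+1}(\rr)=b\in K_0$) the polynomial $h_\rr(t)\in K_0[t]$; since $\sigma$ permutes the roots $\alpha_i$ and fixes $a$, it permutes the multiset $\{\rr_1^2,\dots,\rr_{2g+1}^2\}=\{a-\alpha_1,\dots,a-\alpha_{2g+1}\}$, and because $h_\rr$ has coefficients in $K_0$, $\sigma$ permutes its root multiset $\{\rr_1,\dots,\rr_{2g+1}\}$; but a permutation of the $\rr_i$ that is compatible with the induced permutation of the squares $\rr_i^2$ (which are \emph{distinct}) must be the \emph{same} permutation, and since each $\rr_i$ is the unique root of $h_\rr$ whose square equals the corresponding $a-\alpha_i$, we get $\sigma(\rr_i)=\rr_i$ for every $i$ — so $\sigma(\rr)=\rr$ as a tuple, hence $\sigma(\a_\rr)=\a_{\sigma(\rr)}=\a_\rr$, and $\a_\rr\in J(K_0)$.

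I expect the main obstacle to be making the equivariance statement $\sigma(\a_\rr)=\a_{\sigma(\rr)}$ fully rigorous: one must check that the Mumford data $(U_\rr,V_\rr)$ transform correctly under $\sigma$ — i.e. that $\sigma$ applied coefficientwise to $U_\rr,V_\rr$ yields $U_{\sigma(\rr)},V_{\sigma(\rr)}$ — which reduces to the identities $\sigma(\mathbf{s}_i(\rr))=\mathbf{s}_i(\sigma(\rr))$ and the fact that $a,b\in K_0$ are $\sigma$-fixed, all of which is routine from the explicit formulas in Theorem \ref{main}. The only genuinely delicate point, already handled above, is arguing that a Galois element permuting the $\rr_i$ consistently with the distinct squares $a-\alpha_i$ must act trivially on each $\rr_i$; this uses that the squares are pairwise distinct (so the indexing is unambiguous) rather than any deeper property.
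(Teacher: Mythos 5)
Your overall strategy (Galois descent through the bijection $\rr\mapsto\a_{\rr}$ of Theorem \ref{main}, using the equivariance $\sigma(\a_{\rr})=\a_{\sigma(\rr)}$ and uniqueness of Mumford representations) is sound and is essentially a repackaging of the paper's argument, which descends through the pair $(U_{\rr},V_{\rr})$ itself. But there is a concrete error in how you exploit ``$\sigma(\rr)=\rr$''. The Galois action on $\RR_{1/2,P}$ is necessarily the \emph{twisted} one: since $(\sigma\rr_i)^2=a-\sigma(\alpha_i)$, the tuple $\sigma\cdot\rr$ must be re-indexed by the permutation that $\sigma$ induces on the roots $\alpha_1,\dots,\alpha_{2g+1}$ (your formula $\sigma(\rr)_i^2=a-\sigma^{-1}(\alpha_i)$ does not define an element of $\RR_{1/2,P}$ as it stands). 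Consequently $\sigma\cdot\rr=\rr$ means $\sigma(\rr_i)=\rr_j$ whenever $\sigma(\alpha_i)=\alpha_j$, i.e.\ that the root multiset of $h_{\rr}$ is $\sigma$-stable; it does \emph{not} mean $\sigma(\rr_i)=\rr_i$ for each $i$. So in your forward direction the conclusion ``each $\rr_i$ is Galois-fixed, so all $\rr_i\in K_0$'' is false in general: it would force every $\alpha_i=a-\rr_i^2$ to lie in $K_0$ and every $a-\alpha_i$ to be a square in $K_0$, which is the much stronger condition (i) of Theorem \ref{full} and is contradicted, e.g., when $f$ is irreducible over $K_0$ but $\overline{a-x}$ is a square in $L=K_0[x]/f(x)K_0[x]$ (Theorem \ref{mainS}), in which case some $\a_{\rr}\in J(K_0)$ although no $\rr_i$ lies in $K_0$. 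The same conflation appears in your converse, where you assert $\sigma(\rr_i)=\rr_i$ for every $i$; what actually follows (using that the $\rr_i$ are distinct with distinct squares) is $\sigma(\rr_i)=\rr_j$ with $\alpha_j=\sigma(\alpha_i)$, i.e.\ $\sigma\cdot\rr=\rr$ in the twisted sense.

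The good news is that the repair is one line: fixedness of the tuple under the correctly defined action is \emph{equivalent} to $h_{\rr}$ being $\sigma$-invariant, i.e.\ to $\mathbf{s}_i(\rr)$ being fixed for all $i$, which is exactly the condition in the theorem; with that substitution both of your implications go through (forward: $\a_{\rr}$ fixed $\Rightarrow$ $\sigma\cdot\rr=\rr$ by injectivity $\Rightarrow$ $\mathbf{s}_i(\rr)\in K_0$; converse: $\mathbf{s}_i(\rr)\in K_0$ $\Rightarrow$ $\sigma\cdot\rr=\rr$ $\Rightarrow$ $\sigma(\a_{\rr})=\a_{\sigma\cdot\rr}=\a_{\rr}$). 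You should still make the equivariance $\sigma(\a_{\rr})=\a_{\sigma\cdot\rr}$ explicit via $\sigma(\mathbf{s}_i(\rr))=\mathbf{s}_i(\sigma\cdot\rr)$ and the uniqueness of Mumford representations; the paper's proof sidesteps the whole twisted-action subtlety by applying uniqueness directly to $(U_{\rr},V_{\rr})$: $\a_{\rr}$ is $\sigma$-fixed iff $\sigma U_{\rr}=U_{\rr}$ and $\sigma V_{\rr}=V_{\rr}$, and then the coefficients of $U_{\rr},V_{\rr}$ determine (and are determined by) the $\mathbf{s}_i(\rr)$ over $K_0$.
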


\begin{proof}
Let $\bar{K}_0$ be the algebraic closure of $K_0$. Clearly, $\bar{K}_0$ is algebraically closed and
$$K_0\subset K_0^{\sep}\subset \bar{K}_0\subset K.$$
In the course of the proof we may and will assume that $K=\bar{K}_0$.

Let $\rr$ be an element of$\RR_{1/2,P}$. We know that 
$\a_{\rr} \in J(K_0^{\sep})$ and the corresponding polynomials $U_{\rr}(x)$ and $V_{\rr}(x)$ have coefficients in $K_0^{\sep}$. This means that there is a finite Galois field extension $E/K_0$ with Galois group $\Gal(E/K)$ such that 
$$K_0\subset E \subset K_0^{\sep}$$
such that
$$\a_{rr}\in J(E); \ U_{\rr}(x), V_{\rr}(x) \in E[x].$$
Let $\Aut(K/K_0)$ be the group of all field automorphisms of $K$ that leave invariant every element of $K_0$. Clearly, the (sub)field $E$  is $\Aut(K/K_0)$-stable and the natural (restriction) group homomorphism
$$\Aut(K/K_0) \to \Gal(E/K_0)$$
is {\sl surjective}. Since the subfield $E^{\Gal(E/K_0)}$ of Galois invariants coincides with $K_0$, we conclude that the subfield of invariants $E^{\Aut(K/K_0)}$ also coincides with $K_0$. It follows that
$$E[x]^{\Aut(K/K_0)}=K_0[x], \ J(E)^{\Aut(K/K_0)}=J(K_0).$$
 Clearly, for each $\sigma \in \Aut(K/K_0)$ the Mumford representation of $\sigma \a_{\rr}$ is $(\sigma U_{\rr}, \sigma V_{\rr})$.

 Now let us assume that  $\a_{\rr}\in J(K_0)$. Then 
 $$\sigma \a_{\rr}=\a_{\rr} \ \forall \ \sigma \in \Aut(K/K_0).$$
 The uniqueness of Mumford's representations implies that
 $$\sigma U_{\rr}(x)=U_{\rr}(x), \ \sigma V_{\rr}(x)=V_{\rr}(x) \ \forall \ \sigma \in \Aut(K/K_0).$$
 It follows  that
 $$U_{\rr}(x), V_{\rr}(x) \in K_0[x].$$
 Taking into account that $a,b \in K_0$, we obtain from the formulas in Theorem \ref{main} that 
$$\mathbf{s}_i(\rr) \in K_0 \ \forall \ i=1, \dots , 2g.$$

Conversely, let us assume that for a certain $\rr \in \RR_{1/2,P}$
$$\mathbf{s}_i(\rr) \in K_0 \ \forall \ i=1, \dots , 2g.$$
(We know that $\mathbf{s}_{2g+1}(\rr)$ also lies in $K_0$.) This implies that
both $U_{\rr}(x)$ and $V_{\rr}(x)$ lie in $K_0[x]$. In other words,
 $$\sigma U_{\rr}(x)=U_{\rr}(x), \ \sigma V_{\rr}(x)=V_{\rr}(x) \ \forall \ \sigma \in \Aut(K/K_0).$$
 This means that for every $\sigma \in \Aut(K/K_0)$ both $\a_{\rr}$ and $\sigma \a_{\rr}$ have the  same Mumford representation, namely, $(U_{\rr}, V_{\rr})$. This implies that 
 $$\sigma \a_{\rr}=\a_{\rr}  \ \forall \ \sigma \in \Aut(K/K_0),$$
 i.e.,
 $$\a_{\rr} \in J(E)^{\Aut(K/K_0)}=J(K_0).$$
\end{proof}

\begin{thm}
\label{full}
Suppose that a point
 $$P =(a,b) \in \CC(K_0)\subset J(K_0),$$
 i.e.,
 $$a,b \in K_0, \ b^2=f(a).$$  Then the following conditions are equivalent.
 
 \begin{itemize}
 \item[(i)]
 $\alpha_i \in K_0$ and $a-\alpha_i$  is a square in $K_0$
  for all $i$ with $1\le i \le 2g+1$.
 \item[(ii)]
 All $2^{2g}$ elements $\a\in J(K)$ with $2\a=P$ actually lie in $J(K_0)$.
 \end{itemize}
\end{thm}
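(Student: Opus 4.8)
The plan is to prove the two implications separately, using Theorem \ref{main} as the main computational engine.

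For the direction (i) $\Rightarrow$ (ii): assume every $\alpha_i \in K_0$ and every $a - \alpha_i$ is a square in $K_0$. Then for each $i$ we may pick $\beta_i \in K_0$ with $\beta_i^2 = a - \alpha_i$. The only constraint defining $\RR_{1/2,P}$ is that the product of the chosen square roots equals $-b$; since $\prod_i \beta_i^2 = \prod_i (a-\alpha_i) = f(a) = b^2$, we have $\prod_i \beta_i = \pm b$, and replacing $\beta_1$ by $-\beta_1$ if necessary we may assume $\prod_i \beta_i = -b$. Thus $\rr = (\beta_1, \dots, \beta_{2g+1}) \in \RR_{1/2,P}$ has all components in $K_0$. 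Now any other element of $\RR_{1/2,P}$ is obtained by flipping signs of an even number of the $\beta_i$ (to preserve the product), so in fact every element of $\RR_{1/2,P}$ has all components in $K_0$. By the formulas in Theorem \ref{main}, every $\mathbf{s}_i(\rr) \in K_0$, so by Theorem \ref{mainR} every $\a_{\rr} \in J(K_0)$. Since the $\a_{\rr}$ range over all $2^{2g}$ elements with $2\a = P$, this gives (ii).

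For the direction (ii) $\Rightarrow$ (i): assume all $2^{2g}$ preimages $\a$ lie in $J(K_0)$. By Theorem \ref{mainR}, for every $\rr \in \RR_{1/2,P}$ we have $\mathbf{s}_i(\rr) \in K_0$ for $i = 1, \dots, 2g$ (and $\mathbf{s}_{2g+1}(\rr) = b \in K_0$ automatically), hence $h_{\rr}(t) = \prod_{i=1}^{2g+1}(t - \rr_i) \in K_0[t]$. The key observation is that the set of roots $\{\rr_1, \dots, \rr_{2g+1}\}$ of $h_{\rr}$ is then a $\Gal$-stable (equivalently $\Aut(K/K_0)$-stable) set, and each $\rr_i^2 = a - \alpha_i \in K_0^{\sep}$; more usefully, $h_{\rr}(t) h_{\rr}(-t) = (-1)^{2g+1}\prod_i (t^2 - \rr_i^2) = -\prod_i(t^2 - (a-\alpha_i))$, so this polynomial in $t^2$ has coefficients in $K_0$, which merely re-expresses that $f(a - t^2) \in K_0[t^2]$ — nothing new. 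Instead I would argue directly: fix $i_0$ and consider the two tuples $\rr$ and $\rr'$ that agree in all coordinates except the $i_0$-th, where $\rr'_{i_0} = -\rr_{i_0}$; but flipping a single sign breaks the product condition, so one must flip a second coordinate, say $i_1$. Comparing $h_{\rr}$ and $h_{\rr'}$, both in $K_0[t]$, and using that they share all roots except $\rr_{i_0}, \rr_{i_1}$ are replaced by their negatives, one extracts that the monic quadratic $(t - \rr_{i_0})(t-\rr_{i_1})$ and $(t+\rr_{i_0})(t+\rr_{i_1})$ both have coefficients that are differences of elements of $K_0$ hence lie in $K_0$; adding, $2\rr_{i_0}\rr_{i_1} \in K_0$ and $\rr_{i_0}^2 + \rr_{i_1}^2 = (a - \alpha_{i_0}) + (a - \alpha_{i_1}) \in K_0$. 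Combined with the analogous relations obtained by flipping $i_0$ against a third index, a short linear-algebra argument over $K_0$ forces each $\rr_{i_0}^2 = a - \alpha_{i_0} \in K_0$, hence $\alpha_{i_0} \in K_0$; and then $h_{\rr} \in K_0[t]$ splits over $K_0^{\sep}$ with $\Gal$ acting only by sign flips preserving the product, which pins down $\rr_{i_0} \in K_0$, i.e. $a - \alpha_{i_0}$ is a square in $K_0$.

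The step I expect to be the main obstacle is the bookkeeping in (ii) $\Rightarrow$ (i): one has $2^{2g}$ tuples and must show that \emph{each individual} square root $\rr_i$ can be taken in $K_0$, starting only from the statement that \emph{every} symmetric function $\mathbf{s}_j(\rr)$ of \emph{every} tuple lies in $K_0$. The cleanest route is probably to work one index $i_0$ at a time, exploit that any two tuples differ by an even number of sign flips, and reduce to a finite system of equations in the quantities $\rr_{i_0}\rr_j$ and $\rr_j^2$; one must be a little careful about degenerate cases (e.g. small $g$, or when some $\rr_i = 0$, which happens exactly when $a = \alpha_i$, i.e. $P = \W_i$), but these are handled directly since then $b = 0$ and the product condition decouples. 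I would also double-check that the ambient hypothesis $\fchar(K) \ne 2$ is all that is needed (division by $2$ in the relation $2\rr_{i_0}\rr_{i_1} \in K_0$), and that no perfectness assumption on $K_0$ is required here, unlike in Lemma \ref{sep}.
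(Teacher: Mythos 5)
Your direction (i)~$\Rightarrow$~(ii) is correct and is essentially the paper's argument: all components of every $\rr\in\RR_{1/2,P}$ lie in $K_0$, hence all $\mathbf{s}_i(\rr)\in K_0$, and Theorem \ref{mainR} finishes. The problem is in (ii)~$\Rightarrow$~(i), exactly at the step you yourself flagged. Writing $h_{\rr}=Q\cdot(t-\rr_{i_0})(t-\rr_{i_1})$ and $h_{\rr'}=Q\cdot(t+\rr_{i_0})(t+\rr_{i_1})$ with $Q=\prod_{i\neq i_0,i_1}(t-\rr_i)$, your claim that the two quadratics ``have coefficients that are differences of elements of $K_0$ hence lie in $K_0$'' is not a valid deduction: $Q$ is not known to lie in $K_0[t]$ at that stage, so nothing about the quadratic factors follows from $h_{\rr},h_{\rr'}\in K_0[t]$ by mere subtraction. (It can be rescued --- e.g.\ $Q=\gcd(h_{\rr},h_{\rr'})$ is a gcd of two polynomials of $K_0[t]$, computed over $K$ but therefore lying in $K_0[t]$, since the roots $\rr_i$ are distinct and $-\rr_{i_0},-\rr_{i_1}$ are not roots of $h_{\rr}$ --- but that argument is absent.) The next assertion, $\rr_{i_0}^2+\rr_{i_1}^2=(a-\alpha_{i_0})+(a-\alpha_{i_1})\in K_0$, is circular as justified, since membership of the $\alpha_i$'s in $K_0$ is precisely what is being proved; the subsequent ``short linear-algebra argument'' and the final Galois ``sign-flip'' step (which needs the observation that $-\rr_i$ is never a root of $h_{\rr}$ unless $\rr_i=0$) are only sketched. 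So as written this direction is not a proof.

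The gap is easy to close, and in two ways. Within your pairwise-flip framework you never need the full polynomials: comparing only the coefficients $\mathbf{s}_1(\rr)$ and $\mathbf{s}_1(\rr')$, both in $K_0$, gives $\rr_{i_0}+\rr_{i_1}\in K_0$ for every pair (here $\fchar\neq 2$ is used), and then for three distinct indices $2\rr_{i_0}=(\rr_{i_0}+\rr_{i_1})+(\rr_{i_0}+\rr_{i_2})-(\rr_{i_1}+\rr_{i_2})\in K_0$, whence $\rr_{i_0}\in K_0$, $\alpha_{i_0}=a-\rr_{i_0}^2\in K_0$, and $a-\alpha_{i_0}$ is a square in $K_0$ --- no gcd, no linear algebra, no Galois action needed. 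The paper does it even more directly with a single flip pattern: for each index $l$ take $\rr^{(l)}$ with $\rr^{(l)}_l=\rr_l$ and all other coordinates negated (an even number of sign changes, so still in $\RR_{1/2,P}$); then $\mathbf{s}_1(\rr)+\mathbf{s}_1(\rr^{(l)})=2\rr_l\in K_0$ at once. Your overall strategy (Theorem \ref{mainR} plus sign-flipped tuples) is the right one and matches the paper's; only the execution of the key extraction step is faulty.
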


\begin{proof}
Assume (i). Then $\a=\a_{\rr}$ for a certain $\rr \in \RR_{1/2,P}$. Our assumptions imply that all $\rr_i=\sqrt{a-\alpha_i}$ lie in $K_0$ and therefore
$$\mathbf{s}_i(\rr) \in K_0 \ \forall \ i=1, \dots , 2g.$$
Now Theorem \ref{mainR} tells us that 
$\a_{\rr}\in J(K_0)$.
This proves (ii).

Assume (ii). It follows from Theorem \ref{mainR} that $\mathbf{s}_i(\rr)\in K_0$ for all $\rr \in \RR_{1/2,P}$ and $i$ with $1 \le i \le 2g+1$. In particular, for $i=1$
$$\sum_{i=1}^{2g+1}\rr_i=\mathbf{s}_1(\rr)\in K_0 \ \forall \ \rr\in  \RR_{1/2,P}.$$
Pick any $\rr\in \RR_{1/2,P}$ and for {\sl any} index $l$ ($1\le l\le 2j+1$) consider
$\rr^{(l)} \in \RR_{1/2,P}$ such that
$$\rr^{(l)}_l=\rr_l, \ \rr^{(l)}_i =-\rr_i \ \forall i \ne l.$$
We have
$$\mathbf{s}_1(\rr)\in K_0,  \ -2\mathbf{s}_1(\rr)+2 \rr_l=\mathbf{s}_1(\rr^{(l)})\in K_0.$$
This implies that $\rr_l \in K_0$. Since $\rr_{\ell}^2=a-\alpha_l$ and $a \in K_0$, we conclude that $\alpha_l$ lies in $K_0$ and $a-\alpha_l$ is a square in $K_0$. This proves (i).
\end{proof}

\begin{rem}
In the case of elliptic curves (i/e., when $g=1$) Theorem \ref{full} is well known, see, e.g., \cite[p. 269--270]{Cassels}.

\end{rem}

The following assertion was inspired by results of Schaefer \cite{Schaefer}.

\begin{thm}
\label{mainS}
Let us consider the $(2g+1)$-dimensional commutative semisimple $K_0$-algebra
$L=K_0[x]/f(x)K_0[x]$.  

A $K_0$-point $P=(a,b)$ on $\CC$ is divisible by $2$ in $J(K_0)$ if and only if
$$(a-x)+f(x)K_0[x] \in K_0[x]/f(x)K_0[x]=L$$
is a square in $L$.
\end{thm}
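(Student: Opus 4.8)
The plan is to connect the ring-theoretic condition in $L = K_0[x]/f(x)K_0[x]$ with the explicit division-by-$2$ formulas of Theorem \ref{main} via the standard isomorphism $L \xrightarrow{\sim} \prod_{i=1}^{2g+1} K_0(\alpha_i)$ (or, after base change to $K_0^{\sep}$, $L\otimes_{K_0}K_0^{\sep}\xrightarrow{\sim}\prod_{i=1}^{2g+1}K_0^{\sep}$), under which the class of a polynomial $p(x)$ maps to the tuple $(p(\alpha_1),\dots,p(\alpha_{2g+1}))$. Under this identification, $(a-x)+f(x)K_0[x]$ maps to the tuple $(a-\alpha_1,\dots,a-\alpha_{2g+1})$, whose entries are the squares $\rr_i^2$ appearing in Theorem \ref{main}. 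So "being a square in $L$" is literally the assertion that there is a single element $\xi\in L$ with $\xi^2 = a-x$ in $L$, i.e. a tuple $(\rr_1,\dots,\rr_{2g+1})$ of square roots of the $a-\alpha_i$ which is \emph{Galois-stable} (invariant under $\Aut(K/K_0)$, equivalently under $\Gal(K_0^{\sep}/K_0)$, since it comes from an element of $L$).

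The first step is to make precise that a square root $\xi$ of $a-x$ in $L$ corresponds exactly to a $\Gal(K_0^{\sep}/K_0)$-stable element of $\RR_{1/2,P}$ \emph{up to sign ambiguity on the individual components}: the subtlety is that the constraint defining $\RR_{1/2,P}$ includes $\prod_i \rr_i = -b$, whereas an arbitrary square root $\xi$ of $a-x$ in $L$ only gives $\prod_i \rr_i = \pm b$. I would handle this by noting that $N_{L/K_0}(\xi)^2 = N_{L/K_0}(a-x) = \prod_i(a-\alpha_i) = b^2$, so $N_{L/K_0}(\xi) = \pm b \in K_0$; if the sign is $-b$ we are in $\RR_{1/2,P}$ directly, and if it is $+b$ we replace $\xi$ by $-\xi$ (still a square root, still in $L$, hence still Galois-stable), which flips the sign of each $\rr_i$ and hence flips $\prod_i\rr_i$ to $-b$ since $2g+1$ is odd. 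Thus: $(a-x)$ is a square in $L$ $\iff$ there exists a Galois-stable $\rr\in\RR_{1/2,P}$.

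The second step is to invoke Theorem \ref{mainR}. If $\rr\in\RR_{1/2,P}$ is Galois-stable, i.e. fixed by $\Aut(K/K_0)$ componentwise, then certainly each symmetric function $\mathbf{s}_i(\rr)$ lies in $K_0$, so by Theorem \ref{mainR} the point $\a_{\rr}$ with $2\a_{\rr}=P$ lies in $J(K_0)$; hence $P$ is divisible by $2$ in $J(K_0)$. Conversely, if $P = 2\a$ with $\a\in J(K_0)$, then $\a = \a_{\rr}$ for some $\rr\in\RR_{1/2,P}$, and the last displayed formula of Theorem \ref{main},
$$\rr_i = \mathbf{s}_1(\rr) + (-1)^g\frac{V_{\rr}(\alpha_i)}{U_{\rr}(\alpha_i)},$$
expresses $\rr_i$ as $p(\alpha_i)$ for a fixed polynomial $p(x)\in K_0[x]$ (here one uses Theorem \ref{mainR} again: $\a_{\rr}\in J(K_0)$ forces $U_{\rr},V_{\rr}\in K_0[x]$ and $\mathbf{s}_1(\rr)\in K_0$, and $U_{\rr}(\alpha_i)\neq 0$ so one may write $1/U_{\rr}(\alpha_i)$ as $q(\alpha_i)$ for a polynomial $q\in K_0[x]$, using that $U_{\rr}$ and $f$ are coprime). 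Then the class of $p(x)$ in $L$ is a square root of $a-x$, because its image in each factor is $\rr_i$ with $\rr_i^2 = a-\alpha_i$, and the $\alpha_i$ are distinct so the $K_0^{\sep}$-points of $\Spec L$ are exactly the $\alpha_i$; squaring gives $p(x)^2 \equiv a-x \pmod{f(x)}$ directly. This closes the equivalence.

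The main obstacle, and the only genuinely delicate point, is the sign bookkeeping in the first step — reconciling an abstract square root in $L$ (which knows only $\prod\rr_i=\pm b$) with the membership condition $\prod\rr_i = -b$ built into $\RR_{1/2,P}$, and checking that the correction $\xi\mapsto -\xi$ does the right thing precisely because $f$ has odd degree $2g+1$. Everything else is a routine translation between the CRT decomposition of $L$ and the tuple language of Theorem \ref{main}, together with two direct applications of Theorem \ref{mainR}; I would also remark that the hypotheses of Lemma \ref{sep} are not needed here, since we never need the individual roots $c_j$ of $U_{\rr}$ to be separable — only the coefficients of $U_{\rr}$, $V_{\rr}$ and the values $\rr_i = p(\alpha_i)$, which lie in $K_0^{\sep}$ automatically.
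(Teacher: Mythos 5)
Your proof is correct and follows essentially the same route as the paper: both translate the condition ``square in $L$'' into tuples $(\rr_1,\dots,\rr_{2g+1})$ via the evaluation maps $\overline{q(x)}\mapsto q(\alpha_i)$, adjust the sign of $\prod_i\rr_i$ to $-b$ by replacing the square root with its negative (using that $2g+1$ is odd), apply Theorem \ref{mainR} in both directions, and in the direction ``divisible by $2$ $\Rightarrow$ square'' construct the square root as $\mathbf{s}_1(\rr)+(-1)^g\,V_{\rr}(\bar{x})/U_{\rr}(\bar{x})$ from the last formula of Theorem \ref{main}, exactly as the paper does. The only wording to adjust is ``fixed by $\Aut(K/K_0)$ componentwise'': an element of $L$ yields a tuple whose entries are permuted compatibly with the Galois action on the $\alpha_i$ (not fixed individually), which is precisely what gives $\mathbf{s}_i(\rr)\in K_0$.
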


\begin{proof}
 For each 
$q(x)\in K_0[x]$ we write $\overline{q(x)}$ for its image in $K_0[x]/f(x)K_0[x]$.

For each $i=1, \dots 2g+1$ there is a homomorphism of $K_0$-algebras
$$\phi_i: L=K_0[x]/f(x)K_0[x] \to K_0^{\sep}, \ \overline{q(x)}=q(x)+f(x)K_0[x]\mapsto q(\alpha_i);$$
 the intersection of the kernels of all $\phi_i$ is $\{0\}$. Indeed, if $\overline{q(x)}\in \ker(\phi_i)$ then $q(x)$ is divisible by $x-\alpha_i$ and therefore if $\overline{q(x)}$ lies in $\ker(\phi_i)$ for all $i$  then $q(x)$ is divisible by $\prod_{i=1}^{2g+1}(x-\alpha_i)=f(x)$, i.e., 
$\overline{q(x)}=0$ in $ K_0[x]/f(x)K_0[x]$.
Clearly,
$$\phi_i(\bar{x})=\alpha_i, \ \phi_i(\overline{a-x})=a-\alpha_i .$$
Since $f(x)$ lies in $K_0[x]$, the set of its roots  $\{\alpha_1, \dots, \alpha_{2g+1}\}$ is a Galois-stable subset of $K_0^{\sep}$. This implies that
 for each $q(x)\in K_0[x]$ and 
  $$\mathcal{Z}=\overline{q(x)}\in K_0[x]/f(x)K_0[x]$$ the product
$$\mathrm{H}_{\mathcal{Z}}(t)=H_{q(x)}(t) :=\prod_{i=1}^{2g+1}\left(t-\phi_i(\overline{q(x)}\right)=
\prod_{i=1}^{2g+1}(t-q(\alpha_i))$$
is a degree $(2g+1)$ monic polynomial with coefficients in $K_0$. In particular, if $q(x)=a-x$ then
$$H_{a-x}(t)=\mathrm{H}_{\overline{a-x}}(t) =\prod_{i=1}^{2g+1}[t-(a-\alpha_i)].$$

Assume that $P$ is divisible by $2$ in $J(K_0)$, i.e., there is $\a \in J(K_0)$ with $2\a=P$. It follows from Theorems \ref{main} and \ref{mainR} that there is $\rr \in \RR_{1/2,P}$ such that $\a_{\rr}=\a$ and all $\mathbf{s}_i(\rr)$  lie in $K_0$. This implies that both polynomials $U_{\rr}(x)$ and  $V_{\rr}(x)$ have coefficients in $K_0[x]$. Recall (Theorem \ref{main}) that $f(x)$ and $U_{\rr}(x)$ are relatively prime. This means that
$\overline{U_{\rr}(x)}=U_{\rr}(\bar{x})$ is a {\sl unit} in $K_0[x]/f(x)K_0[x]$. Therefore we may define
$$\mathcal{R}=\mathbf{s}_1(\rr)+(-1)^g\frac{V_{\rr}(\bar{x})}{U_{\rr}(\bar{x})} \in  K_0[x]/f(x)K_0[x].$$
The last formula of Theorem \ref{main} implies that  for all $i$ we have
$\phi_i(\mathcal{R})=\rr_i$ and therefore 
$$\phi_i(\mathcal{R}^2)=\rr_i^2=a-\alpha_i=\phi_i(a-\bar{x}).$$
This implies that $\mathcal{R}^2=a-\bar{x}$. It follows that 
$$a-\bar{x}=(a -x)+f(t)K_0[t] \in K_0[x]/f(x)K_0[x]$$
 is a {\sl square} in $K_0[x]/f(x)K_0[x]$.

Conversely, assume now that there is an element $\mathcal{R} \in L$ such that $$\mathcal{R}^2=a-\bar{x}=\overline{a-x}.$$
 This implies that
$$\phi_i(\mathcal{R})^2=\phi_i(\overline{a-x})=a-\alpha_i,$$
i.e.,
$$\phi_i(\mathcal{R})=\sqrt{a-\alpha_i} \ \forall \ i=1, \dots , 2g+1.$$
This implies that 
$$\prod_{i=1}^{2g+1}\phi_i(\mathcal{R})=\sqrt{f(a)}=\pm b.$$
 Since $(-1)^{2g+1}=-1$, replacing if necessary, $\mathcal{R}$ by $-\mathcal{R}$, we may and will assume that
$$\prod_{i=1}^{2g+1}\phi_i(\mathcal{R})=-b.$$
Now if we put 
$$\rr_i=\phi_i(\mathcal{R}) \ \forall \ i=1, \dots, 2g+1; \ \rr=(\rr_1, \dots, \rr_{2g+1})$$ 
then $\rr\in \RR_{1/2,P}$ and
$$h_{\rr}(t)=\prod_{i=1}^{2g+1}(t-\rr_i)=\prod_{i=1}^{2g+1}(t-\phi_i(\mathcal{R}))=\mathrm{H}_{\mathcal{R}}(t).$$
Since $\mathfrak{H}_{\mathcal{R}}(t)$ lies in $K_0[t]$, the polynomial $h_{\rr}(t)$ also lies in $K_0[t]$. It follows from Theorem \ref{mainR} that $\a_{\rr}\in J(K_0)$. Since $2\a_{\rr}=P$, the point $P$ is divisible by 2 in $J(K_0)$.
\end{proof}

\begin{rem} 
If one assumes additionally that $\fchar(K_0)=0$ and $P$ is none of $\W_i$ (i.e., $a \ne \alpha_i$ for any $i$) then the assertion of
 Theorem \ref{mainS} follows from  \cite[Th. 1.2 and the first paragraph of p. 224]{Schaefer}. 

\end{rem}

\section{Torsion points on $\Theta_d$}
\label{torsion}

We keep the notation of Section \ref{rat}. In particular, 
 $K_0$ be a subfield of $K$ such that 
$$f(x) \in K_0[x].$$
Notice  that  the involution $\iota$ is also defined over $K_0$,
 the absolute Galois group $\Gal(K_0)$ leaves invariant $\infty$ and permutes points of $\CC(K_0^{\sep})$; in addition, it permutes elements of $J(K_0^{\sep})$, respecting the group structure on $J(K_0^{\sep})$.

If $n$ is a positive integer that is {\sl not}
divisible by $\fchar(K)$ then we write $J[n]$ for the kernel of multiplication by $n$ in $J(K)$. It is well known that $J[n]$ is a free $Z/n\Z$-module of rank $2g$ that  lies in $J(K_0^{\sep})$; in addition, it is a $\Gal(K_0)$-stable subgroup of $J(K_0^{\sep})$, which gives us the (continuous) group homomorphism
$$\rho_{n,,J}:\Gal(K_0)\to \Aut_{\Z/n\Z}(J[n])$$
that defines the Galois action on $J[n]$. We write $\tilde{G}_{n,J,K_0}$ for the image
$$\rho_{n,J}(\Gal(K_0))\subset \Aut_{\Z/n\Z}(J[n]).$$
Let $\II_n$ be the identity automorphism of $J[n]$.
The following assertion was inspired by a work of F. Bogomolov \cite{Bogomolov} (where  the $\ell$-primary part of the Manin-Mumford conjecture was proven).

\begin{thm}
\label{tor}
Suppose that $g>1$ and $n\ge 3$ is an  integer that is not divisible by $\fchar(K)$. Let $N>1$ be an integer that is relatively prime to $n$ and such 
 that $N \le 2g-1$ and $\tilde{G}_{n,J,K_0}$  contains
either $N\cdot \II_n$ or $-N\cdot \II_n$.  Let us put $d(N):=[2g/(N+1)]$.

Then $\Theta_{d(N)}(K)$ does not contain nonzero points of order dividing  $n$ except points of order $1$ or $2$. In particular, if $n$ is odd then $\Theta_{d(N)}(K)$ does not contain nonzero points of order dividing  $n$.
\end{thm}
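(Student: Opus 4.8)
The plan is to reduce Theorem \ref{tor} to Theorem \ref{ThetaD} by exploiting the Galois action on torsion points. Suppose $\a \in \Theta_{d(N)}(K)$ is a point of order $n_0$ dividing $n$, and assume for contradiction that $2\a \ne 0$. Since $n$ is not divisible by $\fchar(K)$, the point $\a$ lies in $J(K_0^{\sep})$, and it is determined by its image in $J[n]$ (after multiplying by $n/n_0$ we may even regard $\a$ as living inside $J[n]$, but more to the point the action of $\Gal(K_0)$ on $\a$ factors through its action on $J[n]$). By hypothesis $\tilde G_{n,J,K_0}$ contains $N\cdot\II_n$ or $-N\cdot\II_n$, so there exists $\sigma \in \Gal(K_0)$ whose image under $\rho_{n,J}$ is $N\cdot\II_n$ (resp. $-N\cdot\II_n$). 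Since $\a$ is killed by $n$, the element $\sigma$ acts on $\a$ by multiplication by $N$ (resp. $-N$), i.e. $\sigma(\a)=N\a$ or $\sigma(\a)=-N\a$.

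Next I would check that the remaining hypotheses of Theorem \ref{ThetaD} are in force: $g>1$ is assumed, $N>1$ and $N\le 2g-1$ are assumed, and $d(N)=[2g/(N+1)]$ is exactly the integer appearing there. The automorphism $\sigma$ lies in $\Aut(K/K_0)$ (restrict or extend it to $K$; alternatively Theorem \ref{ThetaD} is really about $\Aut(K/K_0)$, and any element of $\Gal(K_0)=\Aut(K_0^{\sep}/K_0)$ extends to an element of $\Aut(K/K_0)$ since $K$ is algebraically closed over $K_0^{\sep}$ — or one works with $K=\bar K_0$ as in the proof of Theorem \ref{mainR}). Thus Theorem \ref{ThetaD} applies to $\a$ and tells us that $\a$ has order $1$ or $2$ in $J(K)$, contradicting $2\a\ne 0$. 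Hence every point of $\Theta_{d(N)}(K)$ of order dividing $n$ has order $1$ or $2$.

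For the final sentence: if $n$ is odd, then a point of order $1$ or $2$ whose order also divides the odd number $n$ must have order $1$, so $\Theta_{d(N)}(K)$ contains no nonzero points of order dividing $n$ at all.

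I expect the only genuinely delicate point to be the passage from the hypothesis "$\tilde G_{n,J,K_0}$ contains $\pm N\cdot\II_n$" to "there is $\sigma\in\Aut(K/K_0)$ with $\sigma(\a)=\pm N\a$": one must be careful that $\a$ is annihilated by $n$ so that the scalar $N\bmod n$ acts on it in the obvious way (if the order of $\a$ properly divides $n$ this is automatic), and that a Galois automorphism of $K_0^{\sep}$ realizing the prescribed action on $J[n]$ can be promoted to an automorphism of $K$ fixing $K_0$, which is where one invokes that $K$ is an algebraic (indeed algebraically closed) extension of $K_0^{\sep}$, exactly as in the reduction step at the start of the proof of Theorem \ref{mainR}. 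Everything else is a direct citation of Theorem \ref{ThetaD}.
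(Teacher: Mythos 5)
Your proposal is correct and follows essentially the same route as the paper: observe that a torsion point of order dividing $n$ lies in $J[n]\subset J(K_0^{\sep})$, use the hypothesis that $\pm N\cdot\II_n\in\tilde{G}_{n,J,K_0}$ to produce $\sigma$ with $\sigma(\a)=\pm N\a$, and then invoke Theorem \ref{ThetaD}. Your extra care about promoting $\sigma\in\Gal(K_0)$ to an element of $\Aut(K/K_0)$ is a point the paper glosses over, and it is handled correctly.
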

\begin{proof}
Clearly,  $(N+1)\cdot d(N)<2g+1$.
Suppose that $\b$ is a nonzero  point of order dividing $n$ in $\Theta_{d(N)}(K)$.  We need to prove that $2\b=0$.

Indeed, 
$\b \in J[n]\subset J(K_0^{\sep})$ and therefore
$$\b \in \Theta_d(K)\bigcap J(K_0^{\sep})=\Theta_d(K_0^{\sep}).$$
By our assumption, there is $\sigma \in \Gal(K)$ such that
$\sigma (\a)=N\a$ or $-N\a$ for all $\a \in J[n]$. This implies that
$\sigma (\b)=N\b$ or $-N\b$. 
It follows from Theorem \ref{ThetaD}  that  $2\b=0$ in $J(K)$. 
\end{proof}

\begin{ex}
Suppose that $K$ is the field $\C$ of complex numbers, $g=2$ and $\CC$ is the genus $2$ curve
$$y^2=x^5-x+1.$$  Let us put $N=2$. Then $d(N)=2$.
Let $n=\ell$ be an {\sl odd} prime.  Then $\Z/n\Z$ is the prime field $\F_{\ell}$. 
Results of L. Dieulefait \cite[Th. 5.8 on pp. 509--510]{Dieulefait}
 and Serre's Modularity Conjecture \cite{SerreDuke} that was proven by C. Khare and J.-P. Wintenberger \cite{Khare} imply
 that $\tilde{G}_{\ell,J,K_0}$  is ``as large as possible"; 
in particular, it contains all the homotheties $\F_{\ell}^{*}\cdot\II_{\ell}$. 
This implies that $\tilde{G}_{\ell,J,K_0}$ contains $2\cdot\II_{\ell}$, since $\ell$ is odd. 
It follows from Corollary \ref{tor} that $\Theta_1=\CC(\C)$ does {\sl not} contain points of  order $\ell$ for {\sl all odd primes} $\ell$.

Actually, using his algorithm mentioned above, B. Poonen had already checked that the only torsion points on 
this curve  are the Weierstrass points $\W_i$ (of order 2) and $\infty$ (of order 1) \cite[Sect. 14]{PoonenEXP}.

Notice that the Galois group of $x^5-x+1$ over $\Q$ is the full symmetruc group $\ST_5$. This implies that the ring of $\C$-endomorphisms of $J$ coincides with $\Z$ \cite{ZarhinMRL00}. In particular, $J$ is an absolutely simple abelian surface.
\end{ex}

\begin{thm}
\label{genericT}
 Suppose that  $g>1$,  $K_0=\Q, K=\C$ and $\alpha_1,  \dots , \alpha_{2g+1} \in \C$ are algebraically independent (transcendental) elements of $\C$
 (i.e., $$\CC:y^2=\prod_{i=1}^{2g+1}(x-\alpha_i)$$ is a {\sl generic hyperelliptic curve}). Then:
\begin{itemize}
 \item[(i)]
$\Theta_{[2g/3]}(\C)$ does {\sl not} contain nonzero points of {\sl odd} order.
\item[(ii)]
All $2$-power torsion points in 
$\Theta_{[g/2]}(\C)$ have  order 1 or 2.
\end{itemize}
\end{thm}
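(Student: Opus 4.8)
The plan is to derive both statements from Theorem~\ref{tor} (equivalently, Theorem~\ref{ThetaD}), once one knows that the mod-$n$ Galois images $\tilde{G}_{n,J,K_0}$ contain the appropriate homothety. First a word on the base field: although the statement is phrased over $\Q$, the curve $\CC$ and the polynomial $f(x)$ are really defined over the subfield $K_0\subset K=\C$ generated over $\Q$ by the coefficients of $f(x)$, and since $\alpha_1,\dots,\alpha_{2g+1}$ are algebraically independent over $\Q$, so are those coefficients; thus $K_0$ is a purely transcendental, in particular regular, extension of $\Q$ with $f(x)\in K_0[x]$. Consequently the restriction map $\Gal(K_0)\to\Gal(\bar{\Q}/\Q)$ is surjective, so for every $N$ the mod-$N$ cyclotomic character of $\Gal(K_0)$---which is exactly the similitude character of the action of $\Gal(K_0)$ on $J[N]$ with respect to the Weil pairing---is onto $(\Z/N\Z)^{*}$. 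So the proof reduces to producing the right homothety in $\tilde{G}_{n,J,K_0}$ in each of the two cases.

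For (i), suppose $\b\in\Theta_{[2g/3]}(\C)$ is a nonzero torsion point of odd order $n$, so $n\ge 3$. I apply Theorem~\ref{tor} with $N=2$: indeed $N=2>1$ is prime to $n$, $N=2\le 2g-1$ as $g>1$, and $d(2)=[2g/3]$. It remains to see $2\cdot\II_n\in\tilde{G}_{n,J,K_0}$. Geometrically the monodromy of the family $y^{2}=\prod_i(x-t_i)$ of hyperelliptic curves is the full symplectic group away from $2$; in particular, as $n$ is odd, $\tilde{G}_{n,J,K_0}\supseteq\Sp_{2g}(\Z/n\Z)$ (classical, going back to A'Campo; see also \cite{PoonenStoll}). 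Pick $\sigma\in\Gal(K_0)$ with $\chi_n(\sigma)=4$, where $\chi_n$ is the mod-$n$ cyclotomic character; then $\rho_{n,J}(\sigma)$ has similitude $4$, so $(2\cdot\II_n)^{-1}\rho_{n,J}(\sigma)$ has similitude $1$ and hence lies in $\Sp_{2g}(\Z/n\Z)\subseteq\tilde{G}_{n,J,K_0}$; therefore $2\cdot\II_n=\rho_{n,J}(\sigma)\bigl((2\cdot\II_n)^{-1}\rho_{n,J}(\sigma)\bigr)^{-1}\in\tilde{G}_{n,J,K_0}$. Now Theorem~\ref{tor} (with $n$ odd) tells us that $\Theta_{[2g/3]}(\C)$ contains no nonzero point of order dividing $n$, contradicting $\b\ne 0$. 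This proves (i); and since $[g/2]\le[2g/3]$, so that $\Theta_{[g/2]}\subseteq\Theta_{[2g/3]}$, it follows in particular that $\Theta_{[g/2]}(\C)$ contains no nonzero point of odd order.

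For (ii), suppose $\b\in\Theta_{[g/2]}(\C)$ has order $2^{k}$ with $k\ge 2$; I derive a contradiction from Theorem~\ref{tor} with $n=2^{k}$ and $N=3$: here $N=3>1$ is prime to $2^{k}$, $N=3\le 2g-1$ as $g>1$, and $d(3)=[2g/4]=[g/2]$. What must be supplied is that $\tilde{G}_{2^{k},J,K_0}$ contains $3\cdot\II_{2^{k}}$ or $-3\cdot\II_{2^{k}}$ for every $k\ge 2$. The odd-case argument breaks down, because the mod-$2$ image is only the symmetric group $\ST_{2g+1}$ and the $2$-adic image is a proper subgroup of $\mathrm{GSp}_{2g}(\Z_2)$; instead one invokes the explicit determination of this $2$-adic image for the generic hyperelliptic Jacobian due to Yelton~\cite{Yelton} (building on Poonen--Stoll~\cite{PoonenStoll}), according to which it contains a principal congruence subgroup $\ker\!\bigl(\mathrm{GSp}_{2g}(\Z_2)\to\mathrm{GSp}_{2g}(\Z/2^{m}\Z)\bigr)$ with $m\le 2$. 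Since $-3\equiv 1\pmod 4$, the homothety $-3\cdot\II_{2^{k}}$ lies in this congruence subgroup, hence in $\tilde{G}_{2^{k},J,K_0}$, for every $k\ge 2$. Theorem~\ref{tor} then shows $\Theta_{[g/2]}(\C)$ contains no nonzero point of order dividing $2^{k}$ other than points of order $1$ or $2$, contradicting that $\b$ has order $2^{k}\ge 4$. Hence every $2$-power torsion point of $\Theta_{[g/2]}(\C)$ has order $1$ or $2$.

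The reduction to homothety statements and the odd case are routine: symplectic monodromy together with surjectivity of the cyclotomic character do all the work, and the one mild point---passing from mod $\ell^{a}$ to mod $n$ for composite odd $n$---is subsumed in the statement $\tilde{G}_{n,J,K_0}\supseteq\Sp_{2g}(\Z/n\Z)$ (or follows from Serre's independence arguments). The main obstacle is the $2$-adic input for part (ii): one must extract from Poonen--Stoll~\cite{PoonenStoll} and Yelton~\cite{Yelton} the precise level $2^{m}$ of the principal congruence subgroup contained in the $2$-adic image of the generic hyperelliptic Jacobian and verify $m\le 2$ (it is exactly this that makes $-3\cdot\II$ available, $N=3$ being forced by the requirement $d(N)=[g/2]$), and also check that this largeness survives over the transcendental base field $K_0$ rather than only over number fields. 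A fallback, avoiding the precise description, is to specialize the $\alpha_i$ to suitable algebraic values, apply Bogomolov's theorem~\cite{Bogomolov} that the $2$-adic image over a number field contains an open subgroup of the homotheties, and transfer largeness back to the generic curve by Hilbert irreducibility; but one then has to ensure the open subgroup of scalars so obtained is large enough to contain a homothety usable in Theorem~\ref{tor}.
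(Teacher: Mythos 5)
Your proposal is correct and follows essentially the same route as the paper: Theorem \ref{tor} with $N=2$, $d(2)=[2g/3]$ for odd orders (your similitude manipulation is exactly the paper's Lemma \ref{use}, fed by the Poonen--Stoll full symplectic image for odd $n$), and with $N=3$, $d(3)=[g/2]$ for $2$-power orders, fed by Yelton's $2$-adic result. The only difference is bookkeeping at the step you yourself flagged: Yelton literally gives the symplectic level-$2$ congruence subgroup $\Gamma(2)$, and the paper (Example \ref{generic}(ii)) upgrades this, via surjectivity of the similitude (cyclotomic) character and the fact that every unit of $\Z/2^e\Z$ is $\equiv 1 \bmod 2$, to the level-$2$ congruence subgroup $G\Gamma(2)$ of $\Gp(J[n],\bar{e}_n)$, which contains $3\cdot\II_n$ --- so your ``principal congruence subgroup of level $2^m$, $m\le 2$'' assertion is available with $m=1$, and either of $\pm 3\cdot\II_n$ may be used.
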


We will prove Theorems \ref{genericT}  in Section \ref{symplectic}.

\begin{rem}

 Let $K_0, K=\C$ and $\CC$ be as in Theorem \ref{genericT}. 
\begin{itemize}
 \item [(i)]
B. Poonen and M. Stoll \cite[Th. 7.1]{PoonenStoll} proved that the only torsion points on this generic curve are the Weierstrass points $\W_i$
 (of order 2) and $\infty$ (of order 1). 
 \item [(ii)]
Let $s_1, \dots , s_{2g+1} \in \C$ be the corresponding basic symmetric functions in $\alpha_1, \dots, \alpha_{2g+1}$ and 
let us consider the (sub)field
$$L:=\Q(s_1, \dots , s_{2g+1}\subset \Q(\alpha_1, \dots, \alpha_{2g+1})=K_0.$$
Then $f(x)$ lies in $L[x]$ and its Galois group over $L$ is the full symmetric group $\ST_{2g+1}$. 
This implies that the ring of $\C$-endomorphisms of $J$ coincides with $\Z$ \cite{ZarhinMRL00}. 
In particular, $J$ is an absolutely {\sl simple} abelian variety. (Of course, this result is well known.)
It follows from the generalized Manin-Mumford conjecture (also proven by M. Raynaud \cite{Raynaud2}) 
that the set of torsion points on $\Theta_d(\C)$ is finite for all $d<g$.

\end{itemize}
\end{rem}

\section{Abelian varieties with big $\ell$-adic Galoid images}
\label{symplectic}

We need to recall some basic facts about fields of definition of torsion points on abelian varieties.

Recall that a positive integer $n$ is {\sl not} divisible by $\fchar(K)$ and the rank $2g$ free $\Z/n\Z$-module  $J[n]$ lies in $J(K^{\sep})$. Clearly, all $n$th roots of unity of $K$ lie in $K^{\sep}$. We write $\mu_n$ for the order $n$ cyclic multiplicative group of $n$th roots of unity in $K^{\sep}$. We write $K(\mu_n)\subset K^{\sep}$ for the $n$th {\sl cyclotomic} field extension of $K$ and
$$\chi_n: \Gal(K) \to (\Z/n\Z)^{*}$$
for the $n$th {\sl cyclotomic} character that defines the Galois action on all $n$th roots of unity. The Galois group $\Gal(K(\mu_n)/K)$  of the abelian extension $K(\mu_n/K$ is canonically isomorphic to the image
$$\chi_n(\Gal(K))\subset (\Z/n\Z)^{*}=\Gal(\Q(\mu_n)/\Q);$$
the equality holds if and only if the degree $[K(\mu_n):K]$ coincides with $\phi(n)$ where $\phi$ is the {\sl Euler function}. For example, if $K$ is the field $\Q$ of rational numbers then for all $n$
$$[\Q(\zeta_n):\Q]=\phi(n), \ \chi_n(\Gal(\Q))= (\Z/n\Z)^{*}.$$

The Jacobian $J$ carries the canonical principal polarization that is defined over $K_0$ and gives rise to a nondegenerate alternating bilinear form (Weil-Riemann pairing)
$$\bar{e}_n: J[n] \times J[n] \to \Z/n\Z$$
such that for all $\sigma \in \Gal(K)$ and $\a_1, \a_2 \in J[n]$ we have
$$\bar{e}_n(\sigma\a_1,\sigma\a_2)=
\chi_n(\sigma)\cdot
 \bar{e}_n(\a_1,\a_2).$$
(Such a form is defined uniquely up to multiplication by an element of $\Z/n\Z$ and depends on a choice between of an isomorphism between $\mu_n$ and $\Z/n\Z$.)
 
Let
$$\Gp(J[n],\bar{e}_n)\subset \Aut_{\Z/n\Z}(J[n])$$ be
the group of symplectic similitudes of $\bar{e}_n$ that consists of all automorphisms
$u$ of $J[n]$ such that there exists a constant $c = c(u) \in (\Z/n\Z)^{*}$ such that
$$\bar{e_n}\bar{e}_n(u\a_1,u\a_2) = c(u)\cdot\bar{e_n}\bar{e}_n(\a_1,\a_2)
\ \forall \a_1,\a_2 \in J[n].$$
The map
$$\mathrm{mult}_n: \Gp(J[n],\bar{e}_n) \to (\Z/n\Z)^{*}, \ u\mapsto c(u)$$
is a surjective group homomorphism, whose kernel coincides with the {\sl symplectic group} 
$$\Sp(J[n],\bar{e}_n)\cong \Sp_{2g}(\F_{\ell})$$ of $\bar{e}_n$. Both $\Sp(J[n],\bar{e}_n)$ and the group of homotheties $(\Z/n\Z)\II_n$ are subgroups of $\Gp(J[n],\bar{e}_n)$. The Galois-equivariance of the Weil-Riemann pairing implies that
$$\tilde{G}_{n,J,K_0}\subset \Gp(J[n],\bar{e}_n)\subset \Aut_{\Z/n\Z}(J[n]).$$
It is also clear that for each $\sigma \in \Gal(K)$
$$\chi_n(\sigma)=c(\rho_{n,J.K_0}(\sigma))=\mathrm{mult}_n(\rho_{n,J.K_0}(\sigma)) \in (\Z/n\Z)^{*}.$$
Since $\Sp(J[n],\bar{e}_n)=\ker(\mathrm{mult}_n)$, we obtain the following useful assertion.

\begin{lem}
\label{use}

Let us assume that  $\chi_n(\Gal(K))= (\Z/n\Z)^{*}$ (E.g., $K=\Q$ or the field $\Q(t_1, \dots  t_d)$ of rational functions in $d$ independent varibles over $\mathbb{\Q}$.)

Suppose that $\tilde{G}_{n,J,K_0}$ contains $\Sp(J[n],\bar{e}_n)$. Then $\tilde{G}_{n,J,K_0}=\Gp(J[n],\bar{e}_n)$. In particular,
 $\tilde{G}_{n,J,K_0}$ contains the whole group of homotheties $(\Z/n\Z)^{*}\cdot \II_n$.
\end{lem}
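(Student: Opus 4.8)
The plan is to exploit the exact sequence structure of the symplectic similitude group together with the surjectivity of the cyclotomic character. First I would observe that $\tilde{G}_{n,J,K_0}$ sits inside $\Gp(J[n],\bar{e}_n)$ by the Galois-equivariance of the Weil--Riemann pairing, and that the homomorphism $\mathrm{mult}_n\colon \Gp(J[n],\bar{e}_n)\to(\Z/n\Z)^{*}$ restricted to $\tilde{G}_{n,J,K_0}$ equals $\chi_n$ composed with $\rho_{n,J,K_0}$, hence is surjective onto $(\Z/n\Z)^{*}$ by the hypothesis $\chi_n(\Gal(K))=(\Z/n\Z)^{*}$. So $\mathrm{mult}_n$ maps $\tilde{G}_{n,J,K_0}$ onto all of $(\Z/n\Z)^{*}$.

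Next I would combine this with the assumption $\Sp(J[n],\bar{e}_n)\subseteq \tilde{G}_{n,J,K_0}$ and the fact that $\Sp(J[n],\bar{e}_n)=\ker(\mathrm{mult}_n)$. Given any $u\in\Gp(J[n],\bar{e}_n)$, pick $\sigma$ with $\mathrm{mult}_n(\rho_{n,J,K_0}(\sigma))=c(u)$; then $u$ and $\rho_{n,J,K_0}(\sigma)$ have the same multiplier, so $u\cdot\rho_{n,J,K_0}(\sigma)^{-1}\in\ker(\mathrm{mult}_n)=\Sp(J[n],\bar{e}_n)\subseteq\tilde{G}_{n,J,K_0}$. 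Since $\rho_{n,J,K_0}(\sigma)\in\tilde{G}_{n,J,K_0}$ as well, we conclude $u\in\tilde{G}_{n,J,K_0}$. This proves $\Gp(J[n],\bar{e}_n)\subseteq\tilde{G}_{n,J,K_0}$, and the reverse inclusion is already known, giving the equality $\tilde{G}_{n,J,K_0}=\Gp(J[n],\bar{e}_n)$.

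Finally, for the last assertion, I would note that every homothety $c\cdot\II_n$ with $c\in(\Z/n\Z)^{*}$ lies in $\Gp(J[n],\bar{e}_n)$, since $\bar{e}_n(c\a_1,c\a_2)=c^2\,\bar{e}_n(\a_1,\a_2)$ shows $c\cdot\II_n$ is a symplectic similitude with multiplier $c^2$. Hence the full group of homotheties $(\Z/n\Z)^{*}\cdot\II_n$ is contained in $\Gp(J[n],\bar{e}_n)=\tilde{G}_{n,J,K_0}$, as claimed. There is no real obstacle here — the statement is a formal consequence of the group-theoretic setup recalled immediately before the lemma; the only point requiring any care is checking the surjectivity of $\mathrm{mult}_n$ on $\tilde{G}_{n,J,K_0}$, which is exactly the content of the cyclotomic hypothesis together with the identity $\chi_n(\sigma)=\mathrm{mult}_n(\rho_{n,J,K_0}(\sigma))$ displayed in the text.
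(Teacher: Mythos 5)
Your proof is correct and follows essentially the same route the paper intends: the lemma is a formal consequence of the surjectivity of $\mathrm{mult}_n$, the identification $\Sp(J[n],\bar{e}_n)=\ker(\mathrm{mult}_n)$, and the identity $\chi_n(\sigma)=\mathrm{mult}_n(\rho_{n,J}(\sigma))$, exactly as you argue via the coset decomposition. The paper offers no separate proof beyond this setup, so your write-up fills in precisely the intended details, including the easy check that homotheties are similitudes with multiplier $c^2$.
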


\begin{ex}
\label{generic}
Let $K_0$, $K=\CC$ and $\CC$ be as in Theorem \ref{genericT},
 i.e., $\CC$ is a {\sl generic hyperelliptic curve}.
 \begin{itemize}
  \item[(i)]
B. Poonen and M. Stoll proved \cite[Proof of Th. 7.1]{PoonenStoll} that if $n$ is odd then $\tilde{G}_{n,J,K_0}$ contains $\Sp(J[n],\bar{e}_n)$.
It follows from Lemma \ref{use} that $\tilde{G}_{n,J,K_0}=\Gp(J[n],\bar{e}_n)$ for all {\sl odd} $n$. In particular, 
it contains $(\Z/n\Z)^{*}\cdot \II_n$
and therefore contains $2\cdot\II_n$.

\item[(ii)]
Let us assume that $n=2^e$ is a  a power of $2$. J. Yelton \cite{Yelton} proved that that $\tilde{G}_{n,J,K_0}$ contains 
the level 2 {\sl congruence subgroup} $\Gamma(2)$ of 
$\Sp(J[n],\bar{e}_n)$ defined by the condition
$$\Gamma(2)=\{g \in \Sp(J[n],\bar{e}_n)\mid g\equiv\II_n \bmod 2\} \lhd \Sp(J[n],\bar{e}_n).$$
Let us consider 
the level 2 {\sl congruence subgroup} $G\Gamma(2)$ of 
$\Gp(J[n],\bar{e}_n)$ defined by the condition
$$G\Gamma(2)=\{g \in \Gp(J[n],\bar{e}_n)\mid g\equiv\II_n \bmod 2\} \lhd \Gp(J[n],\bar{e}_n).$$
Clearly, $G\Gamma(2)$  contains $3\cdot\II_n$ while the intersection of  $G\Gamma(2)$ and $\Sp(J[n],\bar{e}_n)$
coincides with $\Gamma(2)$. The latter means that $\Gamma(2)$ coincides with the kernel of the restriction of
$\mathrm{mult}_n$ to $G\Gamma(2)$.
In addition,one may easily check that
$$\mathrm{mult}_n(G\Gamma(2))=(\Z/n\Z)^{*}=\mathrm{mult}_n(\Gp(J[n],\bar{e}_n),$$
since
$$(\Z/n\Z)^{*}=\{c \in \Z/n\Z)\mid c \equiv 1 \bmod 2\}.$$
This implies that 
$\tilde{G}_{n,J,K_0}$ contains $G\Gamma(2)$.
In particular, $\tilde{G}_{n,J,K_0}$ contains $3\cdot\II_n$. (See also  \cite[Proof of Th. 7.1]{PoonenStoll}.)
\end{itemize}
\end{ex}

\begin{proof}[Proof of Theorem \ref{genericT}]
 Recall that $d(2)=[2g/3]$. Combining Theorem \ref{tor} (with $N=2$ and any {\sl odd} $n$) with Example \ref{generic}(i),
 we conclude that 
$\Theta_{[2g/3]}(\C)$ does {\sl not} contain nonzero points of  odd order $n$. This proves (i).

Recall that $d(3)=[2g/4]=[g/2]$.
Combining Theorem \ref{tor} (with $N=3$ and $n=2^e$) with Example \ref{generic}(ii), we conclude that all $2$-power torsion points in 
$\Theta_{[g/2]}(\C)$ are points of order 1 or 2.
\end{proof}

The rest of this paper is devoted to the proof of the following result.

\begin{thm}
\label{notorsion}
Let $K_0$ be the field $\Q$ of rational numbers, $K=\C$ the field of complex numbers. Suppose that $g>1$.
 Let $S$ be a non-empty set of odd primes such that for all $\ell\in S$  the image $\tilde{G}_{\ell,J,K_0}=\Gp(J[\ell],\bar{e}_{\ell})$.

If $n>1$ is a positive odd integer, all whose prime divisors lie in $S$ then $\Theta_{[2g/3]}(\C)$ does not contain nonzero points of order dividing $n$.
\end{thm}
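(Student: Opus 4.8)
The plan is to deduce this statement from Theorem \ref{tor}, applied with $N=2$; the only serious input we need beyond the paper is control of the mod-$n$ Galois image. Write $n=\prod_{\ell\mid n}\ell^{e_\ell}$; by assumption every prime $\ell\mid n$ lies in $S$, so $\tilde G_{\ell,J,K_0}=\Gp(J[\ell],\bar e_\ell)$. The key claim --- call it $(\mathrm{C})$ --- is that $\tilde G_{n,J,K_0}=\Gp(J[n],\bar e_n)$, so that in particular $\tilde G_{n,J,K_0}$ contains the homothety $2\cdot\II_n$. Granting $(\mathrm{C})$, all hypotheses of Theorem \ref{tor} are met with $N=2$: here $g>1$, $n\ge 3$, $\fchar(\C)=0$, $N=2>1$, $\gcd(N,n)=1$ because $n$ is odd, $N=2\le 2g-1$ because $g\ge 2$, and $\tilde G_{n,J,K_0}\ni N\cdot\II_n$. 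Since $d(2)=[2g/3]$, Theorem \ref{tor} then tells us that $\Theta_{[2g/3]}(\C)$ contains no nonzero point of order dividing $n$ apart from points of order $2$; as $n$ is odd there are none of those, and the theorem follows. So everything is reduced to $(\mathrm{C})$, which I would establish in two standard steps of group theory.

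\emph{Step 1 (lifting at each prime).} Fix $\ell\mid n$ and let $H_\ell\subseteq\Gp_{2g}(\Z_\ell)$ be the image of $\Gal(K_0)$ acting on the $\ell$-adic Tate module of $J$, a closed subgroup; after choosing a symplectic $\Z_\ell$-basis we identify $\Gp(J[\ell^e],\bar e_{\ell^e})$ with $\Gp_{2g}(\Z/\ell^e)$, and then $\tilde G_{\ell^e,J,K_0}$ is the reduction of $H_\ell$ mod $\ell^e$. By hypothesis $H_\ell$ reduces mod $\ell$ onto $\Gp_{2g}(\F_\ell)$. Since $g\ge 2$ and $\ell$ is odd, $\Sp_{2g}(\F_\ell)$ is perfect; hence the closed commutator subgroup $\overline{[H_\ell,H_\ell]}$ is contained in $\Sp_{2g}(\Z_\ell)$ (the quotient $\Gp_{2g}(\Z_\ell)/\Sp_{2g}(\Z_\ell)\cong\Z_\ell^{*}$ being abelian) and surjects onto $\Sp_{2g}(\F_\ell)$. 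The standard $\ell$-adic lifting lemma --- a closed subgroup of $\Sp_{2g}(\Z_\ell)$ surjecting onto $\Sp_{2g}(\F_\ell)$ equals $\Sp_{2g}(\Z_\ell)$, valid here because $g\ge 2$ and $\ell$ is odd (so that the adjoint module $\mathfrak{sp}_{2g}(\F_\ell)$ is irreducible and nontrivial, and the relevant extension is nonsplit) --- then forces $\overline{[H_\ell,H_\ell]}=\Sp_{2g}(\Z_\ell)$, so $H_\ell\supseteq\Sp_{2g}(\Z_\ell)$. Finally, the multiplier character of $H_\ell$ is the $\ell$-adic cyclotomic character (by Galois equivariance of the Weil--Riemann pairing), which is surjective onto $\Z_\ell^{*}$ because $K_0=\Q$; combined with $H_\ell\supseteq\Sp_{2g}(\Z_\ell)$ this gives $H_\ell=\Gp_{2g}(\Z_\ell)$. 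In particular $\tilde G_{\ell^{e_\ell},J,K_0}=\Gp(J[\ell^{e_\ell}],\bar e_{\ell^{e_\ell}})$.

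\emph{Step 2 (independence across the primes).} The primary decomposition $J[n]=\bigoplus_{\ell\mid n}J[\ell^{e_\ell}]$ has characteristic summands, and the Weil--Riemann pairing $\bar e_n$ is the orthogonal direct sum of the $\bar e_{\ell^{e_\ell}}$; hence $\Gp(J[n],\bar e_n)=\prod_{\ell\mid n}\Gp(J[\ell^{e_\ell}],\bar e_{\ell^{e_\ell}})$, and $\tilde G_{n,J,K_0}$ is a closed subgroup of this product which, by Step 1, projects onto every factor. Passing to closed commutator subgroups --- and using once more that $\Sp_{2g}(\Z/\ell^{e_\ell})$ is perfect for $g\ge 2$ --- we find that $\overline{[\tilde G_{n,J,K_0},\tilde G_{n,J,K_0}]}$ is a subgroup of $\prod_{\ell\mid n}\Sp_{2g}(\Z/\ell^{e_\ell})$ surjecting onto each factor. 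Every nonabelian composition factor of $\Sp_{2g}(\Z/\ell^{e_\ell})$ is isomorphic to $\mathrm{PSp}_{2g}(\F_\ell)$, and for distinct primes $\ell\ne\ell'$ the groups $\mathrm{PSp}_{2g}(\F_\ell)$ and $\mathrm{PSp}_{2g}(\F_{\ell'})$ are non-isomorphic when $g\ge 2$ (e.g.\ by comparing orders, or by the classification of finite simple groups); so by Goursat's lemma, applied inductively on the number of primes dividing $n$, this subgroup is the whole product. Therefore $\tilde G_{n,J,K_0}\supseteq\prod_{\ell\mid n}\Sp_{2g}(\Z/\ell^{e_\ell})=\Sp(J[n],\bar e_n)$. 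Since $\chi_n(\Gal(\Q))=(\Z/n\Z)^{*}$ (as $K_0=\Q$), Lemma \ref{use} now upgrades this to $\tilde G_{n,J,K_0}=\Gp(J[n],\bar e_n)$, which contains $c\cdot\II_n$ for every $c\in(\Z/n\Z)^{*}$, hence $2\cdot\II_n$ because $n$ is odd. This proves $(\mathrm{C})$ and completes the argument.

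I expect the genuine difficulty to lie entirely in the group theory of Steps 1 and 2 at the small primes, especially $\ell=3$: one must be sure that for $g\ge 2$ and $\ell$ odd the finite group $\Sp_{2g}(\F_\ell)$ is perfect, that the mod-$\ell$ Galois image determines the $\ell$-adic image (the $\ell$-adic lifting lemma), and that $\mathrm{PSp}_{2g}(\F_\ell)\not\cong\mathrm{PSp}_{2g}(\F_{\ell'})$ for $\ell\ne\ell'$. The hypothesis $g>1$ of the theorem is precisely what makes these three facts hold uniformly over all odd primes (for $g=1$, $\ell=3$ the lifting step can genuinely fail); everything else --- verifying the hypotheses of Theorem \ref{tor} and of Lemma \ref{use} --- is bookkeeping.
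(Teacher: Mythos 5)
Your proposal is correct and follows essentially the same route as the paper: reduce to showing $\tilde G_{n,J,\Q}=\Gp(J[n],\bar e_n)$ (the paper's Lemma \ref{bigN} combined with Lemma \ref{use}), proved at each prime by the commutator-into-$\Sp$ trick, perfectness of $\Sp_{2g}(\F_\ell)$, and the $\ell$-adic lifting lemma (Lemma \ref{sur}), then glued across primes by Goursat/Ribet using the mutual non-isomorphism of the groups $\mathrm{PSp}_{2g}(\F_\ell)$, and finally fed into Theorem \ref{tor} with $N=2$, $d(2)=[2g/3]$. The only (cosmetic) deviation is that you upgrade to the full similitude group $\Gp_{2g}(\Z_\ell)$ already at the $\ell$-adic level via surjectivity of the cyclotomic character, whereas the paper performs that upgrade only once, modulo $n$, through Lemma \ref{use}.
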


Let us start with the following elementary observation
on Galois properties of torsion points on $J$.

\begin{rem}
\label{commutator}
\begin{enumerate}
\item[(i)]
Let $\tilde{G}_n$ be the {\sl derived subgroup} $[\tilde{G}_{n,J,K_0}, \tilde{G}_{n,J,K_0}]$ of $\tilde{G}_{n,J,K_0}$.  Then $\tilde{G}_n$ is a normal subgroup of finite index in $\tilde{G}_{n,J,K_0}$. Let $K_{0,n}\subset K_0^{\sep}$ be the finite Galois extension of $K_0$ such that the absolute Galois (sub)group $\Gal(K_{0,n})\subset \Gal(K_0)$ coincides with the {\sl preimage}
 $$\rho_{n,J}^{-1}(\tilde{G}_n))\subset \rho_{n,J}^{-1}(\tilde{G}_{n,J,K_0})=\Gal(K).$$
 We have
 $$\tilde{G}_{n,J,K_{0,n}}=\rho_{n,J}(\Gal(K_{0,n}))=$$
 $$\tilde{G}_n=[\tilde{G}_{n,J,K_0}, \tilde{G}_{n,J,K_0}] \subset
 [\Gp(J[n],\bar{e}_n),\Gp(J[n],\bar{e}_n)]\subset \Sp(J[n],\bar{e}_n).$$
 This implies that
 $$\tilde{G}_{n,J,K_{0,n}}\subset \Sp(J[n],\bar{e}_n).$$
 Let $m>1$ be an integer dividing $n$. The inclusion of Galois modules $J[m]\subset J[n]$ induces the {\sl surjective} group homomorphisms
 $$\tilde{G}_{n,J,K_0}\twoheadrightarrow \tilde{G}_{m,J,K_0}, \ 
\tilde{G}_{n,J,K_{0,n}}\twoheadrightarrow \tilde{G}_{m,J,K_{0,n}}\subset \tilde{G}_{m,J,K_0};$$
the latter homomorphism coincides with the restriction of the former one to the (derived) subgroup $\tilde{G}_{n,J,K_{0,n}}\subset \tilde{G}_{n,J,K_0}$.
This implies that  $$\tilde{G}_{m,J,K_{0,n}}=[\tilde{G}_{m,J,K_0}, \tilde{G}_{m,J,K_0}]$$
is the derived subgroup of $\tilde{G}_{m,J,K_0}$. In addition,
$$\tilde{G}_{m,J,K_{0,n}}= 
[\tilde{G}_{m,J,K_0}, \tilde{G}_{m,J,K_0}]\subset
 [\Gp(J[m],\bar{e}_m),\Gp(J[m],\bar{e}_m)]\subset 
 \Sp(J[m],\bar{e}_m).
 $$

 \item[(ii)]
 Recall that $g \ge 2$.
 Now assume that $m=\ell$ is an odd prime dividing $n$. Then $\Sp(J[\ell],\bar{e}_{\ell})$ is {\sl perfect}, i.e., coincides with its own derived subgroup. Assume also that
   $\tilde{G}_{\ell,J,K_0})$ contains $\Sp(J[\ell],\bar{e}_{\ell})$. Then
$$\Sp(J[\ell],\bar{e}_{\ell})\supset \tilde{G}_{\ell,J,K_{0,n}}=$$
$$[\tilde{G}_{\ell,J,K_0}),\tilde{G}_{\ell,J,K_0})\supset 
[\Sp(J[\ell],\bar{e}_{\ell}),\Sp(J[\ell],\bar{e}_{\ell})]=\Sp(J[\ell],\bar{e}_{\ell})$$
and therefore
$$\tilde{G}_{\ell,J,K_{0,n}}=\Sp(J[\ell],\bar{e}_{\ell}).$$
 \end{enumerate}
\end{rem}

We will also need the following result  about closed subgroups of {\sl symplectic groups} 
over the ring $\Z_{\ell}$ of $\ell$-adic integers (\cite[pp. 52--53]{SerreMFV}, \cite[Th. 1.3]{Vasiu}).

\begin{lem}
\label{sur}
Let $g \ge 2$ be an integer and $\ell$ an odd prime. Let $G$ be a closed subroup of $\Sp(2g,\Z_{\ell})$ such that the corresponding reduction map $G \to \Sp(2g,\Z/\ell\Z)$ is surjective.
Then $G=\Sp(2g,\Z_{\ell})$.
\end{lem}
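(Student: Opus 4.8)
The plan is to analyse $G$ through the congruence filtration of $\Sp(2g,\Z_\ell)$ and to collapse the whole statement to a single assertion at level $\ell^2$. For $k\ge 1$ put
$$\Gamma_k=\ker\bigl(\Sp(2g,\Z_\ell)\twoheadrightarrow\Sp(2g,\Z/\ell^k\Z)\bigr),$$
so that $\Gamma_1$ is the kernel of the reduction map in the statement. I would first recall the standard structure of this filtration, all of which uses that $\ell$ is odd: each $\Gamma_k$ is a uniform pro-$\ell$ group; the assignment $1+\ell^kX\mapsto X\bmod\ell$ identifies $\Gamma_k/\Gamma_{k+1}$ with the additive group of $\mathfrak{sp}_{2g}(\F_\ell)$, compatibly with conjugation, which acts through the quotient $\Sp(2g,\F_\ell)$ by the adjoint action; raising to the $\ell$-th power carries $\Gamma_k$ into $\Gamma_{k+1}$ and induces the identity map on the graded pieces; and consequently $\Gamma_2=\Gamma_1^{\ell}[\Gamma_1,\Gamma_1]$ is the Frattini subgroup $\Phi(\Gamma_1)$. (Already the description of $\Gamma_1/\Gamma_2$ and the $\ell$-th power statement fail for $\ell=2$, so oddness of $\ell$ is genuinely used here.)

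Since $G$ surjects onto $\Sp(2g,\Z_\ell)/\Gamma_1$, it suffices to prove $G\supseteq\Gamma_1$. Set $H=G\cap\Gamma_1$, a closed subgroup of the pro-$\ell$ group $\Gamma_1$; by the Frattini argument it is enough to show that $H$ surjects onto $\Gamma_1/\Phi(\Gamma_1)=\Gamma_1/\Gamma_2\cong\mathfrak{sp}_{2g}(\F_\ell)$. Now $H$ is normal in $G$ and $\Gamma_1$ acts trivially on $\Gamma_1/\Gamma_2$, so the image $\overline H\subseteq\mathfrak{sp}_{2g}(\F_\ell)$ of $H$ is stable under $G/\Gamma_1=\Sp(2g,\F_\ell)$; that is, $\overline H$ is an $\F_\ell[\Sp(2g,\F_\ell)]$-submodule. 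I would then invoke that the adjoint module $\mathfrak{sp}_{2g}(\F_\ell)$ is irreducible for $\ell$ odd: the determinant of the Cartan matrix of type $C_g$ equals $2$, which is prime to $\ell$, so the Weyl module of highest weight $2\varpi_1$ is already simple and, by Steinberg's restriction theorem, stays irreducible on restriction to the finite group $\Sp(2g,\F_\ell)$. Hence $\overline H$ is either $0$ or all of $\mathfrak{sp}_{2g}(\F_\ell)$; in the latter case $H\Phi(\Gamma_1)=\Gamma_1$, so $H=\Gamma_1$ and we are done.

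Thus everything reduces to excluding $\overline H=0$, and this is the step I expect to be the real obstacle. If $\overline H=0$ then $H=G\cap\Gamma_1\subseteq\Gamma_2$, so the image of $G$ in $\Sp(2g,\Z/\ell^2\Z)$ still surjects onto $\Sp(2g,\F_\ell)$ but meets $\Gamma_1/\Gamma_2$ trivially; in other words the extension
$$1\longrightarrow\mathfrak{sp}_{2g}(\F_\ell)\longrightarrow\Sp(2g,\Z/\ell^2\Z)\longrightarrow\Sp(2g,\F_\ell)\longrightarrow 1$$
would split. The plan is to rule this out by a concrete order computation. Fix a transvection $\bar u=1+N$ in $\Sp(2g,\F_\ell)$, an element of order $\ell$, with $N$ the associated nilpotent endomorphism ($N^2=0$); I would show that no lift of $\bar u$ to $\Sp(2g,\Z/\ell^2\Z)$ has order $\ell$, so that every lift has order $\ell^2$ and no section can exist. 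The mechanism is that for a symplectic lift $1+N'$ with $N'\equiv N\bmod\ell$ one computes $(1+N')^{\ell}\equiv 1+\ell N+\ell\langle Rw,w\rangle N\pmod{\ell^2}$, where $w$ spans the line of the transvection and $R$ parametrises the lift; the symplectic constraint on $1+N'$ forces $R$ to be of the form $(1+N)S$ with $S$ in $\mathfrak{sp}_{2g}(\F_\ell)$, and then $\langle Rw,w\rangle=\langle Sw,w\rangle=0$ because $2$ is invertible. Hence $(1+N')^{\ell}\equiv 1+\ell N\not\equiv 1$, so a putative section — which would have to send $\bar u$ to an element of order $\ell$ — cannot exist. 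Therefore $\overline H\ne 0$, so by the previous paragraph $\overline H=\mathfrak{sp}_{2g}(\F_\ell)$, whence $H=\Gamma_1$ and $G=\Sp(2g,\Z_\ell)$. (The non-splitting of the level-$\ell^2$ congruence extension, and the fact that this is the only delicate point, is classical; one may instead simply quote \cite[pp. 52--53]{SerreMFV} or \cite[Th. 1.3]{Vasiu}.)
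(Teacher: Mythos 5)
Your route (congruence filtration, Frattini argument for $\Gamma_1$, irreducibility of the adjoint module, reduction to non-splitting of $\Sp(2g,\Z/\ell^2\Z)\to\Sp(2g,\Z/\ell\Z)$) is a genuine alternative to the paper, whose proof of this lemma consists solely of invoking Vasiu's Theorem 1.3 (equivalently Serre's letter). Your first two steps are essentially sound: $\Phi(\Gamma_1)=\Gamma_2$ for odd $\ell$, the Frattini argument is correct, and the adjoint module $\mathfrak{sp}_{2g}(\F_\ell)$ is indeed irreducible for odd $\ell$ (though your justification via the determinant of the Cartan matrix is loose as a general criterion — witness $F_4$ and $G_2$ in their special characteristics; the fact you need is that the only bad prime for the adjoint module of type $C_g$ is $2$).

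The gap is exactly in the step you flag as the real obstacle. First, for $S\in\mathfrak{sp}_{2g}(\F_\ell)$ the value $\omega(Sw,w)$ is \emph{not} forced to vanish: $S\mapsto\omega(S\cdot,\cdot)$ identifies $\mathfrak{sp}$ with \emph{symmetric} bilinear forms, so this value is essentially arbitrary (e.g.\ $Sx=\omega(x,w')w'$ with $\omega(w,w')=1$ gives $\omega(Sw,w)=-1$). Second, and decisively, the conclusion ``no lift of a transvection has order $\ell$'' is false for $\ell=3$: writing a general lift as $u_0(1+\ell X)$ with $u_0$ the transvection over $\Z/\ell^2\Z$ and $X\in\mathfrak{sp}_{2g}(\F_\ell)$, one finds $(u_0(1+\ell X))^{\ell}\equiv 1+\ell\bigl(N-(\sum_{k=0}^{\ell-1}k^2)\,NXN\bigr)$, and $\sum_{k=0}^{\ell-1}k^2\equiv 0\pmod{\ell}$ only when $\ell\ge 5$; for $\ell=3$ it equals $2$, and choosing $X$ with $\omega(Xv,v)=-1$ kills the bracket. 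Concretely, $\left(\begin{smallmatrix}-2&-1\\ 3&1\end{smallmatrix}\right)$ has characteristic polynomial $x^2+x+1$, hence order $3$, and is congruent mod $3$ to the transvection $\left(\begin{smallmatrix}1&-1\\ 0&1\end{smallmatrix}\right)$; embedding this block into $\Sp_{2g}$ gives an order-$3$ lift, so your order argument proves nothing at $\ell=3$. This is not a repairable edge case within your framework: your proof never uses $g\ge 2$, yet for $g=1$, $\ell=3$ the lemma itself is false (the binary tetrahedral group $\SL_2(\F_3)$, the unit group of the Hurwitz order, embeds as a finite — hence closed — subgroup of $\SL_2(\Z_3)$ mapping isomorphically mod $3$), so any complete proof must use $g\ge 2$ when $\ell=3$. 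For $\ell\ge 5$ your computation can be salvaged as indicated, but for $\ell=3$ — which the paper needs, since its set of odd primes is arbitrary — you must either give a different argument exploiting $g\ge 2$ or do what the paper does and quote Vasiu's Theorem 1.3.
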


\begin{proof}
The result follows from  \cite[Theorem 1.3 on pp. 326--327]{Vasiu} applied to 
$$p=q=\ell, k=\F_{\ell}, W(k)=\Z_{\ell}, G=\Sp_{2g}.$$
\end{proof}

\begin{cor}
Let $g \ge 2$ be an integer and $\ell$ an odd prime. Then for each positive integer $i$ the group $\Sp_{2g}(\Z/\ell^i\Z)$ is perfect.
\end{cor}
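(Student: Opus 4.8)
The plan is to descend from the profinite group $\Sp_{2g}(\Z_\ell)$ to its finite quotients: I will first show, using Lemma~\ref{sur}, that the closure of the derived subgroup of $\Sp_{2g}(\Z_\ell)$ is all of $\Sp_{2g}(\Z_\ell)$, and then push this down modulo $\ell^i$. Write $\mathcal{G}=\Sp_{2g}(\Z_\ell)$ and let $\mathcal{D}=\overline{[\mathcal{G},\mathcal{G}]}$ be the closure of its derived subgroup, which is a closed subgroup of $\mathcal{G}$.

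The base case is the classical fact that $\Sp_{2g}(\F_\ell)$ is perfect for $g\ge 2$ and $\ell$ an odd prime (the same fact already used in Remark~\ref{commutator}(ii)). Let $\pi_1\colon \mathcal{G}\to\Sp_{2g}(\F_\ell)$ be reduction modulo $\ell$; its kernel is open, and for a continuous surjection onto a finite (discrete) group the image of a subset coincides with the image of its closure. Hence $\pi_1(\mathcal{D})=\pi_1\bigl([\mathcal{G},\mathcal{G}]\bigr)=[\Sp_{2g}(\F_\ell),\Sp_{2g}(\F_\ell)]=\Sp_{2g}(\F_\ell)$, so the reduction $\mathcal{D}\to\Sp_{2g}(\F_\ell)$ is surjective. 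Lemma~\ref{sur}, applied to the closed subgroup $\mathcal{D}\subseteq\Sp_{2g}(\Z_\ell)$, then gives $\mathcal{D}=\mathcal{G}$, i.e.\ $\overline{[\mathcal{G},\mathcal{G}]}=\Sp_{2g}(\Z_\ell)$.

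Now fix $i\ge 1$ and let $\pi_i\colon \mathcal{G}\to\Sp_{2g}(\Z/\ell^i\Z)$ be reduction modulo $\ell^i$, again a continuous surjection onto a finite group, so that $\pi_i(\mathcal{D})=\pi_i\bigl([\mathcal{G},\mathcal{G}]\bigr)$. On one side, $\pi_i\bigl([\mathcal{G},\mathcal{G}]\bigr)=[\pi_i(\mathcal{G}),\pi_i(\mathcal{G})]=[\Sp_{2g}(\Z/\ell^i\Z),\Sp_{2g}(\Z/\ell^i\Z)]$, the image of a derived subgroup under a surjective homomorphism being the derived subgroup of the image. On the other side, $\pi_i(\mathcal{D})=\pi_i(\mathcal{G})=\Sp_{2g}(\Z/\ell^i\Z)$ since $\mathcal{D}=\mathcal{G}$. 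Comparing the two descriptions gives $[\Sp_{2g}(\Z/\ell^i\Z),\Sp_{2g}(\Z/\ell^i\Z)]=\Sp_{2g}(\Z/\ell^i\Z)$, which is the assertion.

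The only substantive ingredients are Lemma~\ref{sur} and the base case $i=1$; the sole point deserving care is that $[\mathcal{G},\mathcal{G}]$ need not be closed, which is why one works with its closure $\mathcal{D}$, and this causes no harm because all the groups modulo which we reduce are finite. I anticipate no real obstacle.
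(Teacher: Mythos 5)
Your proof is correct and rests on exactly the same ingredients as the paper's: Lemma~\ref{sur}, the perfectness of $\Sp_{2g}(\Z/\ell\Z)$, and the surjectivity of the reduction maps. The only difference is one of packaging — you argue directly, showing the closure of $[\Sp_{2g}(\Z_\ell),\Sp_{2g}(\Z_\ell)]$ is everything and then pushing forward modulo $\ell^i$, while the paper fixes $i$ and derives a contradiction by pulling back the derived subgroup of $\Sp_{2g}(\Z/\ell^i\Z)$ to a proper closed subgroup of $\Sp_{2g}(\Z_\ell)$; the substance is identical.
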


\begin{proof}
The case $i=1$ is well known. Let $i\ge 1$ be an integer. It is also well known that the reduction modulo $\ell^i$ map
$$\mathrm{red}_i: \Sp_{2g}(\Z_{\ell})\to  \Sp_{2g}(\Z/\ell^i\Z)$$
is a {\sl surjective} group homomorphism. This implies that 
the reduction modulo $\ell$ map
$$\mathrm{\overline{red}}_{i,1}: \Sp_{2g}(\Z/\ell^i\Z)\to  \Sp_{2g}(\Z/\ell\Z)$$
is also a {\sl surjective} group homomorphism. Clearly, 
$\mathrm{red}_1$ coincides with the composition $\mathrm{\overline{red}}_{i,1}\circ \mathrm{red}_i$.

Suppose that  $\Sp_{2g}(\Z/\ell^i\Z)$ is {\sl not} perfect and let
$$H:=[\Sp_{2g}(\Z/\ell^i\Z),  \Sp_{2g}(\Z/\ell^i\Z)]$$ be the derived subgroup of $\Sp_{2g}(\Z/\ell^i\Z)$.
 Since $\Sp(2g,\Z/\ell\Z)$ is perfect, i.e., coincides with its derived subgroup, $$\mathrm{\overline{red}}_{i,1}(H) =\Sp(2g,\Z/\ell\Z).$$
Now the closed subgroup
$$G:= \mathrm{red}_i^{-1}(H)\subset \Sp_{2g}(\Z_{\ell})$$
maps surjectively on $\Sp_{2g}(\Z/\ell\Z)$ but does {\sl not} coincide with 
$\Sp_{2g}(\Z_{\ell})$, because $H$ is a {\sl proper} subgroup of $\Sp_{2g}(\Z/\ell^i\Z)$ and $\mathrm{\overline{red}}_{i,1}$ is surjective. 
This contradicts to Lemma \ref{sur}, which proves the desired perfectness.
\end{proof}

The following lemma will be proven at the end of this section.

\begin{lem}
\label{bigN}
Suppose that $g>1$.
Suppose that $n>1$ is an odd integer that is not divisible by $\fchar(K)$.  If for all primes $\ell$ dividing $n$ the image
$\tilde{G}_{\ell,J,K_0}$ contains $\Sp(J[n],\bar{e}_{\ell})$
then
$\tilde{G}_{n,J,K_0}$ contains $\Sp(J[n],\bar{e}_n)$.

In addition, if $K_0$ is the field $\Q$ of rational numbers then 
$$\tilde{G}_{n,J,K_0}= \Gp(J[n],\bar{e}_n).$$
\end{lem}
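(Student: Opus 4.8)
The plan is to prove Lemma \ref{bigN} by a standard ``Goursat-type'' argument combined with an $\ell$-adic lifting step, exploiting the perfectness of the symplectic groups $\Sp_{2g}(\Z/\ell^i\Z)$ established in the previous corollary. Write $n = \prod_{\ell \mid n} \ell^{e_\ell}$ as its prime factorization, so that $J[n] = \bigoplus_{\ell\mid n} J[\ell^{e_\ell}]$ as $\Gal(K_0)$-modules and $\Sp(J[n],\bar{e}_n) = \prod_{\ell\mid n}\Sp(J[\ell^{e_\ell}],\bar{e}_{\ell^{e_\ell}})$ compatibly with the decomposition. The claim $\tilde G_{n,J,K_0} \supseteq \Sp(J[n],\bar e_n)$ then splits into two tasks: (a) showing $\tilde G_{\ell^{e_\ell},J,K_0} \supseteq \Sp(J[\ell^{e_\ell}],\bar e_{\ell^{e_\ell}})$ for each prime power $\ell^{e_\ell} \| n$, and (b) gluing these across the distinct primes $\ell$.

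For step (a): I would pass to the $\ell$-adic picture. The Galois image $\rho_{\ell^\infty,J}(\Gal(K_0))$ is a closed subgroup $G_\ell \subseteq \Gp(2g,\Z_\ell) \subseteq \Aut(T_\ell J)$, and its intersection with $\Sp(2g,\Z_\ell)$ surjects onto $\Sp(2g,\F_\ell)$ by the hypothesis that $\tilde G_{\ell,J,K_0} \supseteq \Sp(J[\ell],\bar e_\ell)$ — here one uses that $\tilde G_{\ell,J,K_0}$ surjects onto the multiplier group via $\chi_\ell$, so a preimage of $\Sp(J[\ell],\bar e_\ell)$ inside $G_\ell$ lands in $\Sp(2g,\Z_\ell)$ modulo closure. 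Then Lemma \ref{sur} forces $G_\ell \cap \Sp(2g,\Z_\ell) = \Sp(2g,\Z_\ell)$, hence the reduction mod $\ell^{e_\ell}$ gives $\tilde G_{\ell^{e_\ell},J,K_0} \supseteq \Sp(J[\ell^{e_\ell}],\bar e_{\ell^{e_\ell}})$. (Alternatively, one can argue more directly: the reduction $\Sp_{2g}(\Z/\ell^{e_\ell}\Z)\to\Sp_{2g}(\F_\ell)$ has kernel an $\ell$-group on which $\Sp_{2g}(\F_\ell)$ acts, and a repeated use of perfectness plus the fact that $H^1$ vanishes lets one lift; but invoking Lemma \ref{sur} is cleanest.)

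For step (b): set $M := \tilde G_{n,J,K_0} \cap \Sp(J[n],\bar e_n)$, a subgroup of $\prod_\ell \Sp(J[\ell^{e_\ell}],\bar e_{\ell^{e_\ell}})$ that by step (a) surjects onto each factor. Since the factors $\Sp_{2g}(\Z/\ell^{e_\ell}\Z)$ are pairwise of coprime order (the group $\Sp_{2g}(\Z/\ell^{e_\ell}\Z)$ has order a power of $\ell$ times $|\Sp_{2g}(\F_\ell)|$, and these are coprime for distinct $\ell$... ) — more carefully, since each $\Sp_{2g}(\Z/\ell^{e_\ell}\Z)$ is perfect by the Corollary, Goursat's lemma together with the fact that a perfect group has no nontrivial abelian quotient forces $M$ to be the full product; one does the standard induction on the number of primes, at each stage noting that the ``diagonal obstruction'' would give a common nontrivial quotient, impossible since a quotient of a perfect group is perfect while a common quotient of two groups of coprime-order-type... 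Actually the clean statement: a subgroup of $A\times B$ projecting onto both, with $A$, $B$ perfect, and such that $A$, $B$ have no common nontrivial simple quotient appearing with the right multiplicities — here it is simplest to use that $\gcd(|A_{\ell}|,|B_{\ell'}|)$ considerations fail, so instead I will lean on: the only simple quotients of $\Sp_{2g}(\Z/\ell^{e}\Z)$ are quotients of $\mathrm{PSp}_{2g}(\F_\ell)$, which for distinct $\ell$ are non-isomorphic, hence Goursat gives $M = \prod_\ell \Sp(J[\ell^{e_\ell}],\bar e_{\ell^{e_\ell}}) = \Sp(J[n],\bar e_n)$, as desired. Finally, for the last sentence, when $K_0 = \Q$ we have $\chi_n(\Gal(\Q)) = (\Z/n\Z)^*$, so Lemma \ref{use} upgrades $\tilde G_{n,J,K_0} \supseteq \Sp(J[n],\bar e_n)$ to $\tilde G_{n,J,K_0} = \Gp(J[n],\bar e_n)$.

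The main obstacle I anticipate is the gluing step (b): one must be careful that Goursat's lemma is being applied correctly across several primes simultaneously (an induction), and that the ``no common nontrivial quotient'' hypothesis genuinely holds — this rests on the perfectness corollary proved just above (so that abelian quotients are ruled out) together with the classification of composition factors of $\Sp_{2g}(\Z/\ell^e\Z)$, whose only non-abelian simple composition factor is $\mathrm{PSp}_{2g}(\F_\ell)$, and these are pairwise non-isomorphic for distinct odd primes $\ell$ (by order considerations, since $|\mathrm{PSp}_{2g}(\F_\ell)|$ is divisible by $\ell$ but by no other such prime to a matching power — a short direct check). Everything else (the $\ell$-adic surjectivity via Lemma \ref{sur}, the final appeal to Lemma \ref{use}) is routine.
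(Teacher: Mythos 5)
Your overall architecture is the same as the paper's (prime-power case via the $\ell$-adic rigidity Lemma \ref{sur}, gluing over distinct primes via Goursat--Ribet and the pairwise nonisomorphic simple quotients $\Sp_{2g}(\F_\ell)/\{\pm 1\}$, then Lemma \ref{use} for $K_0=\Q$), but there is a genuine gap at the two places where you must descend from the similitude group to the symplectic group. In step (a) you assert that $G_\ell\cap\Sp(2g,\Z_\ell)$ surjects onto $\Sp(2g,\F_\ell)$ because ``a preimage of $\Sp(J[\ell],\bar{e}_\ell)$ inside $G_\ell$ lands in $\Sp(2g,\Z_\ell)$ modulo closure.'' That is not so: an element of the $\ell$-adic Galois image whose reduction lies in $\Sp(J[\ell],\bar{e}_\ell)$ only has multiplier $\equiv 1 \pmod{\ell}$, not multiplier exactly $1$, so it need not lie in $\Sp(2g,\Z_\ell)$, and passing to closures does not repair this. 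The missing idea --- and the paper's --- is the commutator trick: since the multiplier homomorphism takes values in an abelian group, the derived subgroup of the $\ell$-adic image is automatically contained in $G_\ell\cap\Sp(T_\ell(J),e_\ell)$, and its reduction mod $\ell$ is the derived subgroup of $\tilde{G}_{\ell,J,K_0}$, which contains $[\Sp(J[\ell],\bar{e}_\ell),\Sp(J[\ell],\bar{e}_\ell)]=\Sp(J[\ell],\bar{e}_\ell)$ by perfectness (here $g\ge 2$ and $\ell$ is odd). Only after this does Lemma \ref{sur} apply to give $G_\ell\cap\Sp(T_\ell(J),e_\ell)=\Sp(T_\ell(J),e_\ell)$ and hence the prime-power statement.

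The same gap recurs in your step (b): step (a) tells you that the projection of $\tilde{G}_{n,J,K_0}$ to the $\ell$-factor contains $\Sp(J[\ell^{e_\ell}],\bar{e}_{\ell^{e_\ell}})$, but it does not follow that $M=\tilde{G}_{n,J,K_0}\cap\Sp(J[n],\bar{e}_n)$ projects onto that factor: an element that is symplectic at $\ell$ may have nontrivial multiplier at the other primes dividing $n$ and thus lie outside $\Sp(J[n],\bar{e}_n)$. Again the fix is to work with the derived subgroup of $\tilde{G}_{n,J,K_0}$ --- equivalently, to replace $K_0$ by the finite extension $K_{0,n}$ cut out by it, as in Remark \ref{commutator}: this subgroup lies inside $\Sp(J[n],\bar{e}_n)$, and by perfectness of each $\Sp_{2g}(\Z/\ell^{e_\ell}\Z)$ (your Corollary) it still surjects onto every prime-power factor. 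With that in place, your Goursat--Ribet step (abelian common quotients excluded by perfectness, nonabelian ones excluded because the only simple nonabelian quotient of $\Sp_{2g}(\Z/\ell^{e}\Z)$ is $\Sp_{2g}(\Z/\ell\Z)/\{\pm 1\}$ and these are pairwise nonisomorphic for distinct $\ell$) and the final appeal to Lemma \ref{use} when $K_0=\Q$ go through exactly as in the paper.
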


\begin{rem}
\label{useuse}
Thanks to Lemma \ref{use}, the second assertion of Lemma \ref{bigN} follows from the first one.
\end{rem}

\begin{proof}[Proof of Theorem \ref{notorsion}]
Recall that $\Gp(J[\ell],\bar{e}_{\ell})$ contains $\Sp(J[\ell],\bar{e}_{\ell})$. It follows from Lemma \ref{bigN} that 
$$\tilde{G}_{n,J,K_0}= \Gp(J[n],\bar{e}_n).$$
This implies that $\tilde{G}_{n,J,K_0}$ contains $2\cdot\II_n$, because it contains the whole $(\Z/n\Z)^{*}\II_n$. 
It follows from Corollary \ref{tor} that  $\CC(K)$ does not contain points of order $n$.
\end{proof}

\begin{proof}[Proof of Lemma \ref{bigN}]
First, let us do the case when $n$ is a power of an {\sl odd} prime $\ell$.

 Let $\ell \ne \fchar(K)$ be a prime. Let $T_{\ell}(J)$ be the $\ell$-adic Tate module of $J$ that is the projective limit of $J[\ell^i]$ where the transition maps $J[\ell^{i+1}] \to J[\ell^i]$ are multiplications by $\ell$. It is well known that $T_{\ell}(J)$ is a free $\Z_{\ell}$-module of rank $2g$, the Galois actions on $J[\ell^i]$'s are glued together to the continuous group homomorphism
$$\rho_{\ell,J,K_0}:\Gal(K)\to \Aut_{\Z_{\ell}}(T_{\ell}(J))$$
such that the canonical isomorphisms of $\Z_{\ell}$-modules
$$T_{\ell}(J)/\ell^i T_{\ell}(J)=J[\ell^i]$$
become isomorphisms of Galois modules. (Recall that $\Z/\ell^i\Z=\Z_{\ell}/\ell^i\Z_{\ell}$.) The polarization $\lambda$ gives rise to the alternating perfect/unimodular $\Z_{\ell}$-bilinear form
$$e_{\ell}: T_{\ell}(J)\times T_{\ell}(J) \to \Z_{\ell}$$
such that for each $\sigma \in \Gal(K)$
$$e_{\ell}(\rho_{\ell}(\sigma)(v_1),\rho_{\ell}(\sigma)(v_2))=\chi_{\ell}(\sigma)\cdot  e_{\ell}(v_1,v_2) \ \forall \ v_1,v_2\in T_{\ell}(J).$$
Here 
$$\chi_{\ell}:\Gal(K) \to \Z_{\ell}^{*}$$
is the (continuous) {\sl cyclotomic} character of $\Gal(K)$ characterized by the property
$$\chi_{\ell}(\sigma)\bmod \ell^i=\bar{\chi}_{\ell^i}(\sigma) \ \forall \ i.$$
This implies that
$$G_{\ell,J,K_0}=\rho_{\ell,J,K_0}\Gal(K)) \subset \Gp(T_{\ell}(J),e_{\ell}$$
where 
$$\Gp(T_{\ell}(J),e_{\ell})\subset \Aut_{\Z_{\ell}}(T_{\ell}(J))$$
is the group of symplectic similitudes of $e_{\ell}$. Clearly,
$\Gp(T_{\ell}(J),e_{\ell})$ contains the corresponding {\sl symplectic group}
$$\Sp(T_{\ell}(J),e_{\ell})\cong \Sp_{2g}(\Z_{\ell})$$ 
and the subgroup of {\sl homotheties/scalars} $\Z_{\ell}^{*}$. It is also clear that the {\sl derived subgroup}
$[\Gp(T_{\ell}(J),e_{\ell}), \Gp(T_{\ell}(J),e_{\ell})]$ lies in $\Sp(T_{\ell}(J),e_{\ell})$.

 For each $n=\ell^i$ the {\sl reduction  map} modulo $\ell^i$ sends 
$\Gp(T_{\ell}(J),e_{\ell})$ {\sl onto} $\Gp(J[\ell^i],\bar{e}_{\ell^i})$,
 $\Sp(T_{\ell}(J),e_{\ell})$ {\sl onto} $\Sp(J[\ell^i],\bar{e}_{\ell^i})$ and
 $\Z_{\ell}^{*}$ {\sl onto} $(\Z/\ell^i\Z)^{*}\cdot\II_{\ell^i}$. In particular, 
if $\ell$ is {\sl odd} then the {\sl scalar} $2\in \Z_{\ell}^{*}$ goes to 
$$2\cdot\II_{\ell^i}\in \Gp(J[\ell^i],\bar{e}_{\ell^i}).$$
As for $G_{\ell,J,K_0}$, its image under the reduction map modulo $\ell^i$ coincides with $\tilde{G}_{\ell^i,J,K_0}$.
It is known \cite{SerreAb} that $G_{\ell,J,K_0}$ is a compact $\ell$-adic Lie subgroup in $\Gp(T_{\ell}(J),e_{\ell})$ and therefore is a {\sl closed subgroup} of $\Gp(T_{\ell}(J),e_{\ell})$ with respect to $\ell$-adic topology. Clearly, the intersection 
$$G_{\ell}:=G_{\ell,J,K_0}\bigcap \Sp(T_{\ell}(J),e_{\ell})$$
is a closed subgroup of $\Sp(T_{\ell}(J),e_{\ell})$. In addition,
the {\sl derived subgroup} of $G_{\ell,J,K_0}$
$$[G_{\ell,J,K_0},G_{\ell,J,K_0}]\subset G_{\ell,J,K_0} \bigcap 
[\Gp(T_{\ell}(J),e_{\ell}), \Gp(T_{\ell}(J),e_{\ell})]\subset G_{\ell,J,K_0}\bigcap \Sp(T_{\ell}(J),e_{\ell})=G_{\ell},$$
i.e.,
$$[G_{\ell,J,K_0},G_{\ell,J,K_0}]\subset G_{\ell}.$$

Let us assume that $\ell$ is {\sl odd} and $\tilde{G}_{\ell,J,K_0}$ contains $\Sp(J[\ell],\bar{e}_{\ell})$. Then the reduction modulo $\ell$ of $[G_{\ell,J,K_0},G_{\ell,J,K_0}]$ contains the {\sl derived subgroup} $[\Sp(J[\ell],\bar{e}_{\ell}),\Sp(J[\ell],\bar{e}_{\ell})]$.
Since our assumptions on $g$ and $\ell$ imply that the group  $\Sp(J[\ell],\bar{e}_{\ell})$ is {\sl perfect}, i.e.,
$$[\Sp(J[\ell],\bar{e}_{\ell}),\Sp(J[\ell],\bar{e}_{\ell})]=\Sp(J[\ell],\bar{e}_{\ell}),$$
the reduction modulo $\ell$ of $[G_{\ell,J,K_0},G_{\ell,J,K_0}]$ contains $\Sp(J[\ell],\bar{e}_{\ell})$. This implies that the reduction modulo $\ell$ of $G_{\ell}$ also contains $\Sp(J[\ell],\bar{e}_{\ell})$. Since $G_{\ell}$ is a (closed) subgroup of $\Sp(T_{\ell}(J),e_{\ell})$, its reduction modulo $\ell$ actually {\sl coincides} with $\Sp(J[\ell],\bar{e}_{\ell})$.
It follows from Lemma \ref{sur} that 
$$G_{\ell}=\Sp(T_{\ell}(J),e_{\ell}).$$ 
In particular, the reduction of $G_{\ell}$ modulo $\ell^i$ coincides with $\Sp(J[\ell^i],\bar{e}_{\ell^i})$ for all positive integers $i$. 
Since $G_{\ell,J,K_0}$ contains $G_{\ell}$, its reduction modulo $\ell^i$ contains $\Sp(J[\ell^i],\bar{e}_{\ell^i})$. This means that
$\tilde{G}_{\ell^i,J,K_0}$ contains $\Sp(J[\ell^i],\bar{e}_{\ell^i})$ 
for all positive $i$. This proves Lemma \ref{bigN} for all $n$ that are powers of an odd prime $\ell$.

Now let us consider the general case.
 So, $n>1$ is an odd integer. 
 Let $S$ be the (finite nonempty) set of prime divisors $\ell$ of $n$ and $n=\prod_{\ell\in S}\ell^{\mathbf{d}(\ell)}$
 where all $\mathbf{d}(\ell)$ are positive integers. Using Remark \ref{commutator}, we may replace if necessary $K_0$ by $K_{0,n}$ and assume that 
 $$\tilde{G}_{\ell,J,K_0}=\Sp(J[\ell],\bar{e}_{\ell})$$
 for all $\ell \in S$. The already proven case of prime powers tells us that $$\tilde{G}_{\ell^{d(\ell)},J,K_0}=
 \Sp\left(J\left[\ell^{\mathbf{d}(\ell)}\right],\bar{e}_{\ell}\right)$$
 for all $\ell \in S$.
 On the other hand, we have
$$\Z/n\Z=\oplus_{\ell\in S}\Z/\ell^{\mathbf{d}(\ell)}\Z, \ J[n]=\oplus _{\ell\in S} J\left[\ell^{\mathbf{d}(\ell)}\right],$$
$$\Gp(J[n], \bar{e}_n)=
\prod_{\ell\in S} \Gp\left(J\left[\ell^{\mathbf{d}(\ell)}\right], \bar{e}_{\ell^{\mathbf{d}(\ell)}}\right), \ \Sp(J[n], \bar{e}_n)=
\prod_{\ell\in S}\Sp\left(J\left[\ell^{\mathbf{d}(\ell}\right], \bar{e}_{\ell^{\mathbf{d}(\ell)}}\right),$$
$$\tilde{G}_{n,J,K_0} \subset \prod_{\ell\in S}\tilde{G}_{\ell^{\mathbf{d}(\ell)},J,K_0}= 
\prod_{\ell\in S}\Sp(J[\ell^{\mathbf{d}(\ell}], \bar{e}_{\ell^{\mathbf{d}(\ell)}}).$$
Recall that the group homomorphisms 
$$\tilde{G}_{n,J,K_0} \to \tilde{G}_{\ell^{\mathbf{d}(\ell)},J,K_0}=
\Sp\left(J\left[\ell^{\mathbf{d}(\ell}\right], \bar{e}_{\ell^{\mathbf{d}(\ell)}}\right)$$
(induced by the inclusion of the Galois modules
$J\left[\ell^{\mathbf{d}(\ell)}\right]\subset J[n]$)
are {\sl surjective}. We want to use {\sl Goursat's Lemma} and {\sl Ribet's Lemma} \cite[Sect. 1.4]{SerreGroups}, in order to prove that the subgroup
$$\tilde{G}_{n,J,K_0}\subset \prod_{\ell\in S}\Sp(J[\ell^{\mathbf{d}(\ell}], \bar{e}_{\ell^{\mathbf{d}(\ell)}})$$
coincides with the whole product. In order to do that, we need to check that simple finite groups that are quotients of 
$\Sp(J[\ell^{\mathbf{d}(\ell}]$'s are {\sl mutually nonisomorphic} for {\sl different} $\ell$. Recall that
$$\Sp\left(J\left[\ell^{\mathbf{d}(\ell}\right], \bar{e}_{\ell^{\mathbf{d}(\ell)}})\right) \cong 
\Sp_{2g}\left(\Z/\ell^{\mathbf{d}(\ell)}\Z\right)$$
and therefore is {\sl perfect}.  Therefore, all its simple quotients are also perfect, i.e., are finite simple nonabelian groups.
Clearly, the only simple nonabelian quotient of $\Sp_{2g}\left(\Z/\ell^{\mathbf{d}(\ell)}\Z\right)$ is 
$$\Sigma_{\ell}:=\Sp_{2g}(\Z/\ell\Z)/\{\pm 1\}.$$
However, the groups $\Sigma_{\ell}$ are perfect and mutually nonisomorphic for distinct $\ell$ \cite{Artin1,Artin2}. This ends the proof.
\end{proof}

\begin{rem}

Remark \ref{commutator},
 Lemmas \ref{use} and \ref{bigN}, and their proofs  remain true if one replaces the jacobian $J$ by any principally polarized
 $g$-dimensional abelian variety $A$ over $K_0$ with $g \ge 2$.
\end{rem}


\begin{thebibliography}{99}

\bibitem{Artin1} E. Artin, {\sl The orders of the linear groups}. 
Comm. Pure Appl. Math. {\bf 8} (1955), 355--365.

\bibitem{Artin2} E. Artin, {\sl The orders of the classical simple groups}. 
Comm. Pure Appl. Math. {\bf 8} (1955), 455--472. 

\bibitem{BZ} B.M. Bekker, Yu.G. Zarhin, {\sl The divisibility by 2 of rational points on elliptic curves}. Max-Planck-Institut f\''ur Mathematik Preprint Series 2016-32, Bonn.


\bibitem{Bogomolov} F.A. Bogomolov,  {\sl Points of finite order on an Abelian variety}.  Math. USSR-Izv., {\bf 17:1} (1981), 55--72.

\bibitem{Cassels} J.W.C. Cassels, {\sl Diophantine equations with special reference to elliptic curves}. J. London Math. Soc. {\bf 41} (1966), 193--291. 

\bibitem{Dieulefait} L. Dieulefait, {\sl Explicit determination of the images of the Galois representations attached to abelian surfaces with $\End(A)=\Z$.}  Experimental Math. {\bf 11:4} (2002/03), 503--512.

\bibitem{Flynn} N. Bruin, E.V. Flynn, {\sl Towers of $2$-covers of hyperelliptic curves}. Trans. Amer. Math. Soc. {\bf 357} (2005), no. 11, 4329--4347.

 \bibitem{Khare} C. Khare and J.-P. Wintenberger, {\sl Serre's modularity conjecture}. I, II. Invent Math. {\bf 178} (2009), 485--586.
 
 \bibitem{Mumford} D. Mumford, Tata Lectures on Theta. II.  Progress in Math. {\bf 43}, Birkh\"auser, Boston Basel Stutgart, 1984.
 
 \bibitem{PoonenEXP} B. Poonen, {\sl Computing torsion points on curves}.  Experimental Math. {\bf 10} (2001), no. 3, 449--465.
 
 \bibitem{PoonenStoll} B. Poonen, M. Stoll, {\sl Most odd degree hyperelliptic curves have only one rational point}.
 Annals of Math. {\bf 180} (2014), Issue 3, 1137--1166. 
 
 \bibitem{Raynaud} M. Raynaud, {\sl Courbes sur une vari\'et\'e ab\'elienne et points de torsion}. 
Invent. Math. {\bf 71} (1983), no. 1, 207--233.  

\bibitem{Raynaud2} M. Raynaud, {\sl Sous-vari\'et\'es sur une vari\'et\'e ab\'elienne et points de torsion}.  In: Arithmetic and Geometry (Shafarevich Festschrift) I, pp.  327--352. Progress in Math. {\bf 35} (1983), Birkh\''auser, Boston Basel Stutgart.
 
\bibitem{Schaefer} E. Schaefer, {\sl $2$-descent on the Jacobians of hyperelliptic curves}. 
J. Number Theory {\bf 51} (1995), no. 2, 219--232. 

\bibitem{SerreAb} J.-P. Serre, Abelian $\ell$-adic representations and elliptic curves, 2nd edition.   Advanced Book Classics, Addison-Wesley Publishing Company, Advanced Book Program, Redwood City, CA, 1989. 

\bibitem{Serre} J.-P. Serre,  Algebraic groups and class fields.  Graduate Texts in Math. {\bf 117}, Springer-Verlag, New York,  1988.

\bibitem{SerreMFV} J.-P. Serre,  {\sl Lettre \'a Marie-France Vign\'eras}.  Collected papers. {\bf IV:137}, Springer-Verlag, New York, 2000, pp. 38--55.

\bibitem{SerreDuke}  J.-P. Serre,  {\sl Sur le representations modulaires de degre 2 de $\Gal(\bar{\Q}/\Q)$}. Duke Math. J. {\bf 54} (1987), 179--230.

\bibitem{SerreGroups}  J.-P. Serre, Finite Groups: An Introduction. International Press, Boston, 2016.
\bibitem{Vasiu} A. Vasiu, {\sl Surjectivity criteria for p-adic representations}. I. 
Manuscripta Math. {\bf 112} (2003), no. 3, 325--355. 

\bibitem{Wash} L.C.  Washington, Elliptic Curves: Number Theory and Cryptography. Second edition.  Chapman \& Hall/CRC Press, Boca Raton London New York, 2008.

\bibitem{Yelton} J. Yelton, {\sl Images of $2$-adic representations associated to hyperelliptic jacobians}. J. Number Theory {\bf  151} (2015), 7--17.

\bibitem{ZarhinMRL00}  Yu. G. Zarhin, {\sl  Hyperelliptic jacobians without complex multiplication}. Math. Research Letters {\bf 7} (2000), 123--132.

\bibitem{Zarhin} Yu. G. Zarhin, {\sl  Division by 2 on elliptic curves}. 	arXiv:1507.08238 [math.AG].
 
 
\end{thebibliography}
\end{document}